\def\author@andify{%
	\nxandlist {\unskip ,\penalty-1 \space\ignorespaces}%
	{\unskip {} \@@and~}%
	{\unskip \penalty-2 \space \@@and~}%
}
\newtheorem{theorem}{Theorem}[section]
\newtheorem{lemma}[theorem]{Lemma}
\newtheorem{proposition}[theorem]{Proposition}
\newtheorem{remark}[theorem]{Remark}
\newcommand{\R}{\mathbb R}%
\numberwithin{equation}{section}
\begin{document}
	\title[Poisson kernel for Hadamard manifolds]{Estimates of the Poisson kernel on negatively curved Hadamard manifolds }
	\author[Biswas]{Kingshook Biswas}
	\address{Stat-Math Unit, Indian Statistical Institute, 203 B. T. Rd., Kolkata 700108, India}
	\email{kingshook@isical.ac.in}
	\author[Dewan]{Utsav Dewan}
	\address{Stat-Math Unit, Indian Statistical Institute, 203 B. T. Rd., Kolkata 700108, India}
	\email{utsav\_r@isical.ac.in}
	\author[Pal Choudhury]{Arkajit Pal Choudhury}
	\address{Stat-Math Unit, Indian Statistical Institute, 203 B. T. Rd., Kolkata 700108, India}
	\email{rkjtpalchoudhury\_r@isical.ac.in}
	\subjclass[2020]{Primary 53C20; Secondary 31C05.} 
	\keywords{Hadamard manifold, Poisson kernel, Harmonic measure, Harmonic functions}
	
	\begin{abstract} 
	Let $M$ be an $n$-dimensional Hadamard manifold of pinched negative curvature $-b^2 \leq K_M \leq -a^2$. The solution of the Dirichlet problem at infinity for $M$ leads 
     to the construction of a family of mutually absolutely continuous probability measures $\{\mu_x\}_{x \in M}$ called the harmonic measures. Fixing a basepoint 
$o \in M$, the Poisson kernel of $M$ is the function $P : M \times \partial M \to (0, \infty)$ defined by
$$
P(x, \xi) = \frac{d\mu_x}{d\mu_o}(\xi) \ , \ x \in M, \xi \in \partial M.
$$
 We prove the following global upper and lower bounds for the Poisson kernel:
$$
\frac{1}{C}\: e^{-2K{(o|\xi)}_x}\: e^{a d(x, o)} \le P(x,\xi) \le C\: e^{2K{(x|\xi)}_o}\: e^{-a d(x,o)} \:		
$$
for some positive constants $C \geq 1, K > 0$ depending solely on $a, b$ and $n$. 

\medskip

The above estimates may be viewed as a generalization of the well-known formula for the Poisson kernel in terms of Busemann functions for the special case 
of Gromov hyperbolic harmonic manifolds. These estimates do not follow directly from known estimates on Green's functions or harmonic measures. Instead we use techniques due to Anderson-Schoen 
for estimating positive harmonic functions in cones. As applications, we obtain quantitative estimates for the convergence $\mu_x \to \delta_{\xi}$ as $x \in M \to \xi \in \partial M$, 
and for the convergence of harmonic measures on finite spheres to the harmonic measures on the boundary at infinity as the radius of the spheres tends to infinity.
	\end{abstract}
	\maketitle
	\tableofcontents
	\section{Introduction}
	Let $M$ be a Hadamard manifold, i.e. a complete, simply connected Riemannian manifold of nonpositive sectional curvature, and let $\partial M$ denote its visual boundary. 
Assume furthermore that $M$ has pinched negative curvature $-b^2 \leq K_M \leq -a^2$ for some constants $b \geq a > 0$. 
There is a natural class of probability measures on the boundary $\partial M$ called the {\it harmonic measures}, which arise from the solution of the {\it Dirichlet problem at infinity}. 
The Dirichlet problem at infinity consists in finding, for any continuous function $f$ on $\partial M$, a harmonic function $u$ on $M$ which is continuous on 
$\overline{M} = M \cup \partial M$ and with boundary value equal to $f$, $u_{|\partial M} = f$. This problem was solved independently by Sullivan \cite{Sullivan}, using 
probabilistic techniques, and by Anderson \cite{Anderson} using subharmonic barriers and the Perron method. Denoting by $u_f$ the unique solution to the Dirichlet problem with 
boundary value $f$, the harmonic measures are a family of probability measures $\{ \mu_x \}_{x \in M}$ on $\partial M$ characterized by the equality

\begin{equation}\label{harmonicmeasures}
u_f(x) = \int_{\partial M} f(\xi) \ d\mu_x(\xi) \ \ \ \hbox{for all} \ \ x \in M, f \in C(\partial M).
\end{equation}

One can show that the harmonic measures $\mu_x, x \in M$ are mutually absolutely continuous. Fixing a basepoint $o \in M$, the {\it Poisson kernel} of $M$ is the function 
$P : M \times \partial M \to (0, \infty)$ defined by
$$
P(x, \xi) = \frac{d\mu_x}{d\mu_o}(\xi) \ , \ x \in M, \xi \in \partial M.
$$

For noncompact rank one symmetric spaces, there is a well-known formula for the Poisson kernel in terms of the {\it Busemann cocycle} of the space, which shows 
in particular that the Poisson kernel can be expressed entirely in terms of the distance function in this case. Here the Busemann cocycle of a negatively curved 
Hadamard manifold $M$ is the function $B : M \times M \times \partial M \to \mathbb{R}$ defined by 
$B(x,y,\xi) = \lim_{z \to \xi} (d(x,z) - d(y,z))$. Then the Poisson kernel for rank one symmetric spaces is given by 
\begin{equation} \label{poissonbusemann}
P(x, \xi) = e^{hB(o,x,\xi)}
\end{equation}
where $h > 0$ is the logarithmic volume growth of the manifold, given by 
$$
h = \lim_{R \to \infty} \frac{\log(\hbox{vol}(B(o,R)))}{\log R}.
$$ 
More generally, Knieper and Peyerimhoff showed that formula (\ref{poissonbusemann}) for the Poisson kernel holds for all rank one 
asymptotically harmonic manifolds (\cite{KnPe}). This class of manifolds includes in particular all the known examples of nonflat noncompact harmonic manifolds, 
namely the noncompact rank one symmetric spaces and the Damek-Ricci spaces.

\medskip

For general negatively curved Hadamard manifolds it is too much to expect an exact formula like (\ref{poissonbusemann}) in terms of only the distance function to hold. 
Our aim in this article is to instead prove in the general case global upper and lower bounds for the Poisson kernel in terms of the distance function analogous to 
(\ref{poissonbusemann}).  We first note that the Busemann cocycle can be expressed in terms of the Gromov inner product as
$$
B(o, x, \xi) = d(o,x) - 2(o|\xi)_x = -d(o,x) + 2(x|\xi)_o
$$
and hence we have the following expressions for the Poisson kernel in the case of rank one asymptotically harmonic manifolds:
\begin{equation} \label{poissonharmonic}
e^{-2h(o|\xi)_x} e^{hd(o,x)} = P(x, \xi) = e^{2h(x|\xi)_o} e^{-hd(o,x)}.
\end{equation}

We prove the following upper and lower bounds for the Poisson kernel, which may be viewed as generalizations of the two expressions above:


\medskip

	\begin{theorem} \label{Poisson_estimate_thm}
Let $0 < a \le b$ and $n \ge 2$. There exist positive constants $C \geq 1, K > 0$ depending solely on $a, b$ and $n$ such that for any $n$-dimensional 	Hadamard manifold $M$ with pinched curvature $-b^2 \le K_M \le -a^2$, with origin $o$, the Poisson kernel of $M$ satisfies
		\begin{equation} \label{Poisson_estimate}
		\frac{1}{C}\: e^{-2K{(o|\xi)}_x}\: e^{a d(o, x)} \le P(x,\xi) \le C\: e^{2K{(x|\xi)}_o}\: e^{-a d(o, x)} \:,
		\end{equation} 
		for all $x \in M\:,\:\xi \in \partial M$.
	\end{theorem}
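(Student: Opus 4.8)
The plan is to identify the Poisson kernel with the Martin kernel and then estimate the latter directly by comparison with explicit barriers, following Anderson--Schoen. First I would recall that under the pinching hypothesis the Martin boundary of $M$ is homeomorphic to the geometric boundary $\partial M$, with the minimal Martin kernels given by $k(x,\xi) = \lim_{y \to \xi} G(x,y)/G(o,y)$, where $G$ is the Green's function; since the harmonic measures $\{\mu_x\}$ are precisely the representing measures in the Martin integral representation of positive harmonic functions, one obtains $P(x,\xi) = k(x,\xi)$ (with the common normalization $k(o,\xi) = P(o,\xi) = 1$). This reduces \Cref{Poisson_estimate_thm} to proving the two-sided bound $\frac{1}{C} e^{-2K(o|\xi)_x} e^{a d(o,x)} \le k(x,\xi) \le C e^{2K(x|\xi)_o} e^{-a d(o,x)}$.

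The analytic engine is a pair of barrier estimates obtained from the Laplacian comparison theorem. For a radial function $\phi(r) = e^{-sr}$ with $r = d(\cdot,q)$ one computes $\Delta \phi = e^{-sr}\bigl(s^2 - s\,\Delta r\bigr)$, and the bounds $(n-1)a\coth(ar) \le \Delta r \le (n-1)b\coth(br)$ coming from $-b^2 \le K_M \le -a^2$ show that $\phi$ is superharmonic when $s$ lies below the slow rate and subharmonic when $s$ lies above the fast rate. Building on this, I would invoke the Anderson--Schoen machinery for positive harmonic functions vanishing on a boundary ball: in a cone $C(\xi,\theta) = \{x : \angle_o(\gamma_x,\gamma_\xi) < \theta\}$ about the ray $\gamma_\xi$ from $o$ to $\xi$, a positive harmonic function that vanishes on $\partial M$ away from $\xi$ decays exponentially in the transverse direction, while the boundary Harnack principle controls its growth along $\gamma_\xi$ up to a uniform multiplicative constant per unit length. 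Applied to $u = k(\cdot,\xi)$, this pins down the size of $k$ at the points $\gamma_\xi(t)$ of the ray together with its decay off the ray.

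To assemble the bounds I would use the thin-triangle geometry of the pinched manifold (it is Gromov hyperbolic with $\delta = \delta(a)$). Writing $p = \gamma_\xi(s)$ for the near-projection of $x$ onto $\gamma_\xi$, one has $s \approx (x|\xi)_o$ and $d(x,p) \approx (o|\xi)_x$ up to an additive $O(\delta)$, with $B(o,x,\xi) = (x|\xi)_o - (o|\xi)_x$. A Harnack chain along $\gamma_\xi$ from $o$ to $p$ produces the factor accounting for the along-ray parameter $s$ (at a rate comparable to $a$ on the slow side and to the Harnack/fast constant on the other), while the transverse barrier applied on the cone accounts for the displacement $d(x,p)$ of $x$ from the ray. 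Collecting the along-ray and transverse contributions and re-expressing $s$ and $d(x,p)$ through the Gromov products via the identity $(x|\xi)_o + (o|\xi)_x = d(o,x)$ yields the claimed inequalities with the slow exponent exactly $a$ and a sufficiently large constant $K$ absorbing the fast rate, the Harnack constant, and the hyperbolicity error into $C$.

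The main obstacle is precisely the one flagged in the introduction: the Green's function upper and lower bounds decay at different rates, governed respectively by $a$ and by $b$, so forming the ratio $G(x,y)/G(o,y)$ and letting $y \to \xi$ with those crude bounds produces error factors $e^{c\,d(x,y)}$ that blow up rather than cancel. The point of the Anderson--Schoen approach is to avoid bounding numerator and denominator separately: one instead compares the single harmonic function $k(\cdot,\xi)$ to barriers on the cone and transports the estimates along the ray by boundary Harnack, so that the divergent $d(x,y)$ dependence never appears. Executing this requires care in localizing $k(\cdot,\xi)$ near $\gamma_\xi$ — using the exponential divergence of geodesics to show that the transverse decay beats the along-ray growth outside a bounded neighborhood of the ray — and in tracking the additive hyperbolicity errors so that they are absorbed into the multiplicative constant $C$ rather than into the exponents.
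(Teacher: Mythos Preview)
Your high-level diagnosis is right: one must work with the positive harmonic function $P(\cdot,\xi)$ itself and apply Anderson--Schoen cone-decay estimates, rather than estimate numerator and denominator Green's functions separately. But the specific geometric decomposition you propose has a real gap, and it differs from what the paper does.

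The gap is in the along-ray step. Projecting $x$ to $p=\gamma_\xi(s)$ and running a Harnack chain from $o$ to $p$ gives only $e^{-c_1 s}\le P(p,\xi)\le e^{c_1 s}$ with the Harnack--Yau constant $c_1=c_1(n,b)$; it never produces the sharp rate $a$. For the \emph{upper} bound this is harmless, since $e^{c_1 s}$ can be absorbed into $e^{2Ks}$ by enlarging $K$. For the \emph{lower} bound, however, you need $P(p,\xi)\gtrsim e^{a s}$, and Harnack yields the wrong sign in the exponent. Neither the boundary Harnack principle nor the radial barriers $e^{-sr}$ you mention fix this: $P(\cdot,\xi)$ does not vanish near $\xi$, so a cone-decay argument along $\gamma_\xi$ towards $\xi$ is unavailable, and $e^{aB(o,\cdot,\xi)}$ is not sub- or superharmonic in general. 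Your transverse step, incidentally, is fine (the angle at $p$ between $x$ and $\xi$ is bounded below, so the aperture factor is a constant), but it cannot rescue the along-ray part.

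The paper avoids this by orienting the cone along the segment $[o,x]$ rather than along $\gamma_\xi$. Concretely, for the upper bound one places the vertex at a point $\tilde o$ just beyond $o$ on the line through $x$ and $o$, takes the axis towards $x$, and chooses aperture $\theta/2$ where $\theta=\angle_{\tilde o}(x,\xi)$; then $\xi$ lies outside the cone, $P(\cdot,\xi)$ vanishes on the cone's boundary at infinity, and the Anderson--Schoen estimate (their Lemma~3.2) gives directly
\[
P(x,\xi)\le c\,\theta^{-c'}e^{-a\,d(x,o)}\,P(o,\xi)=c\,\theta^{-c'}e^{-a\,d(o,x)},
\]
since the reference point $x'_0$ of the lemma is exactly $o$, where $P=1$. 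For the lower bound the roles are reversed: the vertex sits just beyond $x$, the axis points back through $o$, and one evaluates the decay estimate at $y=o$ to obtain $1=P(o,\xi)\le c\,\theta^{-c'}e^{-a\,d(o,x)}P(x,\xi)$. In both cases the full distance $d(o,x)$ appears with the precise exponent $a$, and the aperture $\theta$ (the angle between the $[o,x]$-direction and $\xi$) is then related to the relevant Gromov product by elementary comparison with $\mathbb H^2(-b^2)$. Your projection-onto-$\gamma_\xi$ scheme can be repaired by applying the same cone trick with vertex near $p$ and axis towards $o$ to get $P(p,\xi)\gtrsim e^{as}$, but at that point you are essentially reproducing the paper's argument with an extra, unnecessary leg.
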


\medskip

We remark that our estimates in (\ref{Poisson_estimate}) lead in the general case to the same two well-known conclusions that one obtains from the expressions in 
(\ref{poissonharmonic}), namely exponential decay of $P(x,\xi)$ as $x \to \infty$ outside any cone neighbourhood of $\xi$, and exponential growth of $P(x,\xi)$ 
as $x \to \xi$ inside any $\epsilon$-neighbourhood of any geodesic ray with endpoint $\xi$.

\medskip
   
Since the Poisson kernel is defined via Radon-Nikodym derivatives of harmonic measures, one may hope that estimates on harmonic measures would lead to 
estimates on the Poisson kernel. Indeed, it follows from Anderson-Schoen \cite{AS}, in particular section 4 of \cite{AS}, that for any $\xi \in \partial M$ and any 
descending sequence of angles $\theta_i$ converging to $0$, if $C_i = C(o, \xi, \theta_i) \subset \overline{M}$ denotes the solid cone with vertex at $o \in M$, centered around 
$\xi \in \partial M$ and of angle $\theta_i$, then the Poisson kernel can be written as a limit of ratios of harmonic measures:

\begin{equation} \label{ratiolimit}
P(x, \xi) = \lim_{i \to \infty} \frac{\mu_x(C_i)}{\mu_o(C_i)}.
\end{equation}

While there has been previous work on estimates for harmonic measures on negatively curved manifolds, including the work of Kifer-Ledrappier \cite{Kifer} and 
Benoist-Hulin \cite{BeHu}, these estimates on harmonic measures are too weak to prove directly the above estimates on the Poisson kernel. Benoist-Hulin prove the following 
estimate on harmonic measures of cones, 
$$
\frac{1}{C} \theta^K \leq \mu_x(C(x, \xi, \theta)) \leq C \theta^{1/K}
$$
where $C, K > 0$ are constants depending only on $a, b$ and $n$. However the difference between the exponents of $\theta$ in the upper and lower bounds in this estimate 
means that it gives no information on the Poisson kernel when trying to compute it as in (\ref{ratiolimit}) above as a limit of ratios of harmonic measures.  

\medskip

As shown by Anderson-Schoen \cite{AS}, the Poisson kernel may also be computed as a limit of ratios of Green's functions:

\begin{equation} \label{ratiogreens}
P(x, \xi) = \lim_{y \to \xi} \frac{G(y, x)}{G(y, o)}
\end{equation}

where for $y \in M$, $G(y, .)$ denotes the Green's function for $M$ with pole at $y$. One has the following estimates due to Ancona \cite{Ancona} for the Green's 
functions,
$$
\frac{1}{C} e^{- \lambda_1 d(y,x)} \leq G(y,x) \leq C e^{-\lambda_2 d(y,x)} \ \ \ \hbox{for all} \ \ x \in M - B(y, 1/2)
$$
for some constants $C > 0$ and $\lambda_1 = (n-1)\cdot b\ \hbox{coth}(b/4) > \lambda_2 = (n-1)\cdot a$. Again, the fact that in the above upper and lower bounds the exponents 
$\lambda_1, \lambda_2$ differ, means that one cannot use the above estimates to estimate the Poisson kernel via the formula (\ref{ratiogreens}) above. 

\medskip

\medskip

The main tool to prove our estimates is a generalization of a result due to Anderson-Schoen \cite{AS} on the exponential decay near infinity of positive 
harmonic functions $h$ defined in a cone $C$ which vanish at infinity in the cone, i.e. $h(x) \to 0$ as $x \in C$ tends to infinity. Applying this result to the 
functions $h(x) = P(x, \xi)$ for appropriately chosen cones $C$ depending on $\xi$ allows us to prove the estimates of Theorem \ref{Poisson_estimate_thm}. 
   
\medskip

As pointed out by Krantz in \cite{Krantz}, the size estimates of the Poisson kernel play a fundamental role in studying the boundary behavior of harmonic functions and many other aspects of potential theory. In this article however, we present two applications of our estimates of a slightly different flavour. 

\medskip

The first one is regarding the 
concentration of the harmonic measures $\mu_x$ as $x \in M$ converges to a boundary point $\xi \in \partial M$. It is easy to see from the characterization 
(\ref{harmonicmeasures}) of harmonic measures that in this case the measures $\mu_x$ converge weakly to the Dirac mass $\delta_{\xi}$ based at $\xi$. We give a 
quantitative version of this convergence using the estimates from Theorem \ref{Poisson_estimate_thm}, in terms of the rate of decay of harmonic measures 
of complements of cone neighbourhoods of $\xi$. 

\medskip

The second application concerns the convergence 
of harmonic measures $\mu_{o,R}$ on finite spheres $S(o,R)$ to the harmonic measure $\mu_o$ on $\partial M$. While it is possible to show by relatively soft 
arguments the weak convergence 
of the measures $\mu_{o,R}$ to the measure $\mu_o$ as $R \to \infty$ (after identifying the measures $\mu_{o,R}$ with measures on $\partial M$ via radial projection from 
$S(o,R)$ to $\partial M$), we use the estimates of Theorem \ref{Poisson_estimate_thm} to obtain again a quantitative version of this convergence in terms 
of the rate of convergence of integrals of Holder functions on $\partial M$ against these measures. 

\medskip

The paper is organized as follows. In section $2$, we discuss the relevant preliminaries and fix our notation. In section $3$, we obtain an auxiliary estimate for positive harmonic functions. In section $4$, we present some important inequalities involving Riemannian angles and Gromov products. The lower bound and the upper bound of the Poisson kernel are proved in sections $5$ and $6$ respectively. Finally, we conclude by presenting some applications of the pointwise estimates of the Poisson kernel in section $7$. 

\section{Preliminaries}
We will use $c_1, c_2, c_3, \dots$ to denote positive constants depending only on $n,a,b$. 

\medskip

We first recall briefly some basic properties of Gromov hyperbolic spaces, for more details we refer to \cite{Bridson}. A {\it geodesic} in a metric space $M$ is an isometric embedding $\gamma : I \subset \mathbb{R} \to M$ of an interval into $M$. The metric space $M$ is said to be {\it geodesic} if any two points in $M$ can be joined by a geodesic. A geodesic metric space $M$ is said to be {\it Gromov hyperbolic} if there is a $\delta \geq 0$, such that every geodesic triangle in $M$ is $\delta$-thin, i.e. each side is contained in the $\delta$-neighbourhood of the union of the other two sides.

\medskip

The {\it Gromov boundary} of a Gromov hyperbolic space $M$ is defined to be the set $\partial M$ of equivalence classes of geodesic rays in $M$. Here a geodesic ray is an isometric embedding $\gamma : [0,\infty) \to M$ of a closed half-line into $M$, and two geodesic rays $\gamma_1, \gamma_2$ are said to be equivalent if the set $\{ d(\gamma_1(t), \gamma_2(t)) \ | \ t \geq 0 \}$ is bounded. The 
equivalence class of a geodesic ray $\gamma$ is denoted by $\gamma(\infty) \in \partial M$. 

\medskip

We now assume $M$ is a Hadamard manifold satisfying the hypothesis of Theorem \ref{Poisson_estimate_thm}. Then $M$ is, in particular, a Gromov hyperbolic space and hence is equipped with a Gromov boundary $\partial M$. We fix an origin $o$ and for $x \in M$, let $\rho(x)$ denote the distance from $x$ to $o$. Let $B(x,R)$ and $S(x,R)$ denote respectively the geodesic ball and the geodesic sphere with center $x \in M$ and radius $R>0$, respectively. For every geodesic ray $\gamma$,  $\gamma(t) \to \gamma(\infty) \in \partial M$ as $t \to \infty$, and for any $x \in M, \xi \in \partial M$, there exists a geodesic ray  
$\gamma$ such that $\gamma(0) = x, \gamma(\infty) = \xi$. 

\medskip

There is a natural topology on $\overline{M} := M \cup \partial M$, called the {\it cone topology} such that $\overline{M}$ is a compact metrizable space which is a compactification of $M$. This topology is defined as follows: for $v \in T_{x}M$, let $C(x,\alpha)$ be the cone with vertex $x$ and aperture $\alpha$, that is,
$$
C(x,\alpha)=\{y \in M : \angle_{x}\left(v,T_{xy}\right)< \alpha\}\:,
$$ 
where $T_{xy}$ is the tangent vector to the geodesic ray through $x$ and $y$ and $\angle_x$ denotes the angle in $T_xM$. Let 
$$T(x,\alpha,R)=C(x,\alpha)\setminus B(x,R)\:,$$
denote a truncated cone. Then for all such $v$, the domains $T(o,\alpha,R)$ together with the geodesic balls $B(q,r)$, $q \in M$, form a local basis for the cone topology. We will refer to neighbourhoods of $\xi \in \partial M$ with respect to the cone topology as {\it cone neighbourhoods} and denote them by $\mathscr{C}(\xi,\cdot)$. Also when, the vertex of a cone or a truncated cone is $o$, for notation convenience, we will simply omit the vertex in the notation. $\partial C(x,\alpha)$ will denote the boundary of the cone $C(x,\alpha)$ and $\partial T(x,\alpha,R):=\partial C(x,\alpha) \setminus B(x,R)$.

\medskip

For three points $x,y,z \in M$, the {\it Gromov inner product} of $y,z$ 
with respect to $x$ is defined to be   
$$
(y|z)_x := \frac{1}{2}(d(x,y)+d(x,z) - d(y,z)).
$$
Moreover, in our case for $x \in M$, the Gromov inner product $(.|.)_x : M \times M \to [0,+\infty)$ extends to a continuous function $(.|.)_x : \overline{M} \times \overline{M} \to [0,+\infty]$, such that $(\xi|\eta)_x = +\infty$ if and only if $\xi = \eta \in \partial M$. In fact the compactification $\overline{M}$ is homeomorphic to the closed unit ball $\overline{\mathbb{B}} \subset \mathbb{R}^n$, and there is a homeomorphism 
$\phi : \overline{\mathbb{B}} \to \overline{M}$ such that the restriction to the open unit ball $\phi : \mathbb{B} \to M$ is a diffeomorphism.

\medskip

The {\it Busemann cocycle} of $M$ is the function $B : M \times M \times \partial M \to \mathbb{R}$ defined by
$$
B(x, y, \xi) := \lim_{z \to \xi} (d(x,z) - d(y,z)).
$$
The limit above exists, and the Busemann cocycle is a continuous function on $M \times M \times \partial M$. 

\medskip

Let $\Delta$ be the Laplace-Beltrami operator on $M$, corresponding to the Riemannian structure on $M$. We next discuss the important notion of harmonic measures. These arise from the solution of the Dirichlet problem. Since $M$ is negatively curved, any geodesic sphere in $M$ is a $C^\infty$-submanifold of $M$ and hence the Dirichlet problem is solvable on geodesic balls. Then for any $y \in B(x,R)$, the {\it harmonic measure on $S(x,R)$ with respect to $y$} is the probability measure $\mu_{y, B(x,R)}$ on $S(x,R)$ defined by
$$
\int_{S(x,R)} f \ d\mu_{y, B(x,R)} = u_f(y)
$$
for all continuous functions $f$ on $S(x,R)$, where $u_f$ is the solution of the Dirichlet problem in $B(x,R)$ with boundary value $f$.  The harmonic measures $\mu_{y, B(x,R)}$ are mutually absolutely continuous, in fact they are absolutely continuous with respect to the Lebesgue measure of $S(x,R)$. The harmonic measure $\mu_{y, B(x,R)}$ can also be described in terms of Brownian motion started at $y$. If $(B_t)_{t \geq 0}$ is a Brownian motion started at $y$, and $\tau$ is defined to be the first exit time from $B(x,R)$, i.e. 
$$
\tau = \inf\{ \ t > 0 \ | \ B(t) \notin B(x,R) \ \},
$$
then we have
$$
\int_{S(x,R)} f \ d\mu_{y, B(x,R)} = \mathbb{E}(f(B_{\tau}))
$$
for all continuous functions $f$ on $S(x,R)$. In this article, we will only be interested in the harmonic measure with respect to the center $x$ of $B(x,R)$, and thus we simply write $\mu_{x,R}$ in stead of $\mu_{x,B(x,R)}$.
 
\medskip

For a manifold $M$ as in our consideration, it is well-known that the Dirichlet problem at infinity is solvable (\cite{AS}, \cite{Sullivan}), and so in this case one can also define a family of harmonic measures $\{ \mu_x \}_{x \in M}$, which are probability measures on $\partial M$ defined by
$$
\int_{\partial M} f \ d\mu_x = u_f(x)
$$
for any continuous function $f$ on $\partial M$, where $u_f$ is the solution of the Dirichlet problem at infinity with boundary value $f$, i.e. $u_f$ is harmonic on $M$ and $u_f(x) \to f(\xi)$ as $x \to \xi \in \partial M$, for any $\xi \in \partial M$. As in the case of a bounded domain, the harmonic measures $\mu_x$ are mutually absolutely continuous. 

\medskip
 
But due to variable curvature, the harmonic measures at infinity need not belong to the same measure class as the geodesic measure class on $\partial M$. As before, in this case also the harmonic measure $\mu_x$ can be described in terms of Brownian motion started at $x$. If $(B(t))_{t \geq 0}$ is a Brownian motion started at $x$, then it is known (\cite{Sullivan}) that almost every sample path of the Brownian motion converges to a (random) point $B_{\infty}$ in $\partial M$. This limiting point $B_{\infty}$ is a random variable taking values in $\partial M$, whose distribution is precisely the harmonic measure $\mu_x$; for any continuous function $f$ on $\partial M$, we have
$$
\int_{\partial M} f \ d\mu_x = \mathbb{E}(f(B_{\infty})).
$$

\medskip

The Poisson kernel $P(\cdot,\cdot)$ is jointly continuous on $M \times \partial M$ and in addition, for any fixed $\xi \in \partial M$, $P(\cdot,\xi)$ is a positive harmonic function in $M$ that vanishes at all $\eta \ne \xi \in \partial M$ and has a singularity precisely at $\xi$.


\medskip
	
	An important tool in estimating positive harmonic functions is the Harnack-Yau inequality. We state without proof a version of it below.
	\begin{lemma} \label{Har-Yau_lemma}
		Let $M$ be a Hadamard manifold with $-b^2 \le K_M \le 0$ with dimension $\ge 2$. Then, there exists a constant $c_1 = c_1(n, b)$, such that for every open set $\Omega \subset M$ and every positive harmonic function $u$ on $\Omega$, one has
		\begin{equation*} 
		\|\nabla \log u(x)\| \le c_1 \:, \text{ for all } x \in M \text { with } d(x,\partial \Omega) \ge 1 \:.
		\end{equation*}
	\end{lemma}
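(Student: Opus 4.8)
The plan is to obtain Lemma~\ref{Har-Yau_lemma} from the Cheng--Yau gradient estimate, whose proof rests on the Bochner formula together with a maximum-principle localization. The key point is that the curvature bound $-b^2 \le K_M \le 0$ is used only through the lower Ricci bound $\operatorname{Ric}_M \ge -(n-1)b^2$ and through the Cartan--Hadamard property (so that $\exp_x$ is a diffeomorphism and distance functions are smooth away from their centre); the hypothesis $d(x,\partial\Omega)\ge 1$ is exactly what guarantees $B(x,1)\subset\Omega$, so that $u$ is a positive harmonic function on all of $B(x,1)$ and the argument can be run on this ball.

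First I would set $f = \log u$, well defined since $u>0$, and record that harmonicity of $u$ gives $\Delta f = -|\nabla f|^2$. Writing $w = |\nabla f|^2 = \|\nabla\log u\|^2$, the goal is the pointwise bound $w(x)\le c_1^2$. Feeding $\Delta f = -w$ into the Bochner formula
\[
\tfrac12\,\Delta w = |\operatorname{Hess} f|^2 + \langle \nabla f, \nabla \Delta f\rangle + \operatorname{Ric}(\nabla f,\nabla f),
\]
using $\nabla\Delta f = -\nabla w$, the Cauchy--Schwarz bound $|\operatorname{Hess}f|^2 \ge (\Delta f)^2/n = w^2/n$, and the Ricci lower bound, I would arrive at the differential inequality
\[
\tfrac12\,\Delta w \ge \frac{w^2}{n} - \langle\nabla f,\nabla w\rangle - (n-1)b^2\,w .
\]

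Next comes the localization. Choose a cutoff $\phi = \chi(\rho_x)$, where $\rho_x = d(x,\cdot)$, with $\chi$ smooth and nonincreasing, $\phi\equiv 1$ on $B(x,1/2)$, $\operatorname{supp}\phi\subset B(x,1)$, and the standard controls $|\nabla\phi|^2 \le C_n\,\phi$ and $\Delta\phi \ge -C_n(1+b)$. Here the lower bound on $\Delta\phi$ is precisely where the Laplacian comparison theorem (hence the Ricci lower bound) enters, via $\Delta\rho_x \le (n-1)b\coth(b\rho_x)$, which is bounded on the annulus $\{1/2\le\rho_x\le 1\}$; on a Hadamard manifold $\rho_x$ is smooth there, so no Calabi trick is needed. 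I would then consider $G = \phi\,w$, pick a maximum point $x_0$ of $G$ (interior, since $G$ vanishes near $\partial B(x,1)$), and exploit $\nabla G(x_0)=0$ and $\Delta G(x_0)\le 0$. Substituting $\nabla w = -(w/\phi)\nabla\phi$ at $x_0$ into $\Delta G = \phi\Delta w + 2\langle\nabla\phi,\nabla w\rangle + w\Delta\phi$, and using the differential inequality together with the cutoff controls, reduces everything to a quadratic inequality in $G(x_0)$ of the shape $\tfrac{1}{n}G(x_0)^2 \le C_n(1+b^2)\,G(x_0)$. This bounds $G(x_0)$, and since $\phi(x)=1$ gives $w(x)=G(x)\le G(x_0)$, we conclude $\|\nabla\log u(x)\|^2 = w(x)\le c_1^2$ with $c_1 = c_1(n,b)$.

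The hard part will be the bookkeeping at $x_0$: after eliminating $\nabla w$ one is left with a first-order term of size $\sim w^{3/2}|\nabla\phi|$ and the cutoff Laplacian term, and these must be absorbed into the single good term $w^2/n$ by Young's inequality with weights chosen so that a definite fraction of the $w^2$ coefficient survives. Getting the signs and the $b$-dependence right---the Ricci term contributes $b^2$ while the comparison-theorem term contributes $b$, so the final constant scales like $\sqrt{1+b^2}$---is the only delicate step; everything else is the standard Bochner/maximum-principle machinery.
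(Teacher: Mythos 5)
Your argument is the standard Cheng--Yau local gradient estimate (Bochner formula for $\log u$, plus a cutoff and the maximum principle on $B(x,1)$), and it is correct as outlined. The paper itself gives no proof of this lemma --- it states it and cites Li--Wang \cite[Lemma 2.1]{LW}, whose proof is essentially the same computation you describe --- so your proposal matches the intended argument.
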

	This lemma is in fact true for any complete Riemannian manifold whose Ricci curvature is bounded below. A short proof can be found in \cite[Lemma 2.1]{LW}.

\medskip

The pinching condition on the sectional curvatures is instrumental in estimating  angles via useful comparison results. If three points in $M$ lie on the same geodesic, then they are called {\it collinear}. For three points $x,y,z$ which are not collinear, we form the geodesic triangle $\triangle$ in $M$ by the geodesic segments $[x, y],\: [y, z],\: [z, x]$. A comparison triangle is a geodesic triangle $\overline{\triangle}$ in $\mathbb{H}^2(-b^2)$ formed by geodesic segments $[\overline{x}, \overline{y}],\: [\overline{y}, \overline{z}],\: [\overline{z}, \overline{x}]$ of the same lengths as those of $\triangle$ (such a triangle exists and is unique up to isometry). Let $\theta(y,z)$ denote the Riemannian angle between the points $y$ and $z$, subtended at $x$. The corresponding angle between $\overline{y}$ and $\overline{z}$ subtended at $\overline{x}$ is called the {\it comparison angle} of $\theta(y,z)$ in $\mathbb{H}^2(-b^2)$ and is denoted by $\theta_b(y,z)$. Then by Alexandrov's angle comparison theorem,
\begin{equation*} 
\theta_b(y,z) \le \theta(y,z) \:.
\end{equation*}
Now using the upper bound on the sectional curvature ($K_X \le -a^2$), one can similarly consider the comparison angles in $\mathbb{H}^2(-a^2)$, with the above inequality reversed.

\medskip

Our next Lemma asserts the exponential decay of the angle subtended by balls of constant radius, which are away from the origin. This is a standard result in geometry and can be found in \cite[p. 437]{AS}.
\begin{lemma}\label{ball_angle_lemma}
There exists a positive constant $c_2$ such that for $x \in M$, the angle subtended by the ball $B(x,\sqrt{2})$ at $o$, say $\omega(\rho(x))$, satisfies
		\begin{equation*}
		\omega(\rho(x)) \le c_2 e^{-a \rho(x)} \:.
		\end{equation*}
	\end{lemma}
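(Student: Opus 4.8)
The plan is to reduce the estimate to hyperbolic trigonometry in the comparison space $\mathbb{H}^2(-a^2)$ via the angle comparison recalled above. First I would note that if
$$
\alpha := \sup\{\, \angle_o(x,y) : y \in B(x,\sqrt{2}) \,\}
$$
denotes the half-aperture of the narrowest cone with vertex $o$ containing $B(x,\sqrt2)$, then for any two points $y,y' \in B(x,\sqrt2)$ the triangle inequality for the Riemannian angle in $T_oM$ gives $\angle_o(y,y') \le \angle_o(y,x) + \angle_o(x,y') \le 2\alpha$. Hence $\omega(\rho(x)) \le 2\alpha$, and it suffices to bound $\angle_o(x,y)$ for an arbitrary fixed $y \in B(x,\sqrt2)$.

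For such a $y$, consider the geodesic triangle with vertices $o,x,y$, whose side lengths are $d(o,x) = \rho(x)$, $d(x,y) \le \sqrt2$, and $d(o,y)$. Since $K_M \le -a^2$, the comparison inequality discussed in the preliminaries (the reverse of Alexandrov's inequality) gives $\angle_o(x,y) \le \theta_a$, where $\theta_a$ is the angle at $\overline o$ of the comparison triangle $\overline o\, \overline x\, \overline y$ in $\mathbb{H}^2(-a^2)$. Applying the hyperbolic law of sines in $\mathbb{H}^2(-a^2)$ to this comparison triangle, in which the side opposite $\overline o$ has length $d(x,y)$ and the side opposite $\overline y$ has length $\rho(x)$, yields
$$
\sin\theta_a = \frac{\sinh\!\big(a\, d(x,y)\big)}{\sinh\!\big(a\, \rho(x)\big)}\,\sin\beta \le \frac{\sinh(a\sqrt2)}{\sinh\!\big(a\, \rho(x)\big)},
$$
where $\beta$ is the angle of the comparison triangle at $\overline y$ and I used $\sin\beta \le 1$ together with the monotonicity of $\sinh$.

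Finally I would convert this into the claimed exponential decay. For $\rho(x) \ge \sqrt2$ the quantity $t := \sinh(a\sqrt2)/\sinh(a\,\rho(x))$ lies in $(0,1]$, so the elementary convexity bound $\arcsin t \le \tfrac{\pi}{2}t$ on $[0,1]$ together with $\sinh(a\,\rho(x)) \ge \tfrac12 e^{a\,\rho(x)}$ gives
$$
\angle_o(x,y) \le \theta_a \le \tfrac{\pi}{2}\,t \le \pi\sinh(a\sqrt2)\, e^{-a\,\rho(x)},
$$
whence $\omega(\rho(x)) \le 2\alpha \le 2\pi\sinh(a\sqrt2)\, e^{-a\,\rho(x)}$. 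The complementary compact range $\rho(x) < \sqrt2$ is handled trivially, since there $\omega(\rho(x)) \le \pi$ while $e^{-a\,\rho(x)} \ge e^{-a\sqrt2}$, so enlarging the constant absorbs it; taking $c_2$ to be the larger of the two constants proves the lemma. I expect no genuine obstacle: the only mild subtlety is precisely this small-$\rho$ regime, where $o$ may lie near or inside $B(x,\sqrt2)$ and the law of sines yields no decay, but it is exactly the bounded range absorbed into $c_2$. The substantive content is the single application of the hyperbolic law of sines combined with the $\mathrm{CAT}(-a^2)$ angle comparison.
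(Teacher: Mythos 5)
Your argument is sound, and it is worth noting that the paper does not actually prove this lemma: it is stated as ``a standard result in geometry'' with a pointer to Anderson--Schoen \cite[p.~437]{AS}, so your comparison-triangle computation is precisely the self-contained justification the paper omits, and it is the standard one (upper curvature bound $K_M\le -a^2$ gives $\angle_o(x,y)\le\theta_a$, then hyperbolic trigonometry in $\mathbb{H}^2(-a^2)$ yields the exponential decay). The one step you should justify is the passage from $\sin\theta_a\le t$ to $\theta_a\le\arcsin t$: this requires $\theta_a\le\pi/2$, which does not follow from the law of sines alone since a small sine is compatible with an angle near $\pi$. It is true here, but for a reason you should state: once $\rho(x)\ge 2\sqrt{2}$, the side $\overline{x}\,\overline{y}$ of the comparison triangle, of length $d(x,y)\le\sqrt{2}$, is (weakly) its shortest side, so $\theta_a$, being the angle opposite it, is the smallest angle of the triangle and hence at most $\pi/3$. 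Your cutoff $\rho(x)\ge\sqrt{2}$ is not quite enough for this (there $o$ may lie on $\partial B(x,\sqrt{2})$ and $d(o,y)$ may vanish, making the triangle degenerate), but enlarging the compact range that you absorb into the constant from $\{\rho(x)<\sqrt{2}\}$ to $\{\rho(x)<2\sqrt{2}\}$ costs nothing. With that sentence added, the proof is complete.
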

	
\medskip

For $x,y \in \overline{M} \setminus \{o\}$, let $\angle_o(x,y)$ denote the Riemannian angle between $x$ and $y$, subtended at $o$. Then $\left(S(o,1),\angle_o\right)$ is a metric space of diameter $\pi$.	Let $\varphi$ be a Lipschitz function of the Riemannian angle on $S(o,1)$. Then one has the  Lipschitz semi-norm of $\varphi$:
\begin{equation*}
{|\varphi|}_{Lip}:= \displaystyle\sup_{\substack{x \ne y\\ x,y \in S(o,1)}} \frac{|\varphi(x)-\varphi(y)|}{\angle_o(x,y)}\:. 
\end{equation*}
Let $\overline{\varphi}$ be the extension of $\varphi$ on $\overline{M} \setminus \{o\}$ along radial geodesic rays emanating from $o$, with boundary values $\varphi$ on $\partial M$, as $\partial M$ can be identified with $S(o,1)$ under the natural radial projection. For such a function, one has the important notion of a convolution, introduced by Anderson and Schoen  \cite[p. 436]{AS}: let $\chi: \R \to (0,\infty)$ be a fixed $C^2$ approximation to the characteristic function of $[0,1]$ with $supp \:\chi \subset [-2,2]$. Then define an average of $\varphi$ with respect to $\chi$ by,
$$\mathscr{S}(\varphi)(x)= \frac{\int_M \chi\left({d(x,y)}^2\right) \overline{\varphi}(y)\: dy}{\int_M \chi\left({d(x,y)}^2\right) dy}\:.$$
It satisfies the following estimates:

\begin{lemma}\label{AS_convolution_lemma}
For $x \in M$, let $\eta_x$ denote the unique point on $\partial M$ obtained by extending the geodesic segment joining $o$ to $x$. There exists a positive constant $c_3$ such that for all $x \in M$, one has
\begin{itemize}
\item[(i)] $\left|\mathscr{S}(\varphi)(x) - \varphi(\eta_x)\right| \le c_3 \: {|\varphi|}_{Lip}\: e^{-a\rho(x)}$ \:,
\item[(ii)] $\|\nabla \mathscr{S}(\varphi)(x)\| \le c_3 \: {|\varphi|}_{Lip}\: e^{-a\rho(x)}$ \:,
\item[(iii)] $\|\nabla^2 \mathscr{S}(\varphi)(x)\| \le c_3 \:{|\varphi|}_{Lip}\: e^{-a\rho(x)}$ \:.
\end{itemize}
\end{lemma}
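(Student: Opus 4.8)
The plan is to exploit the fact that the kernel $\chi(d(x,y)^2)$ is supported in the ball $B(x,\sqrt2)$, so that $\mathscr{S}(\varphi)(x)$ is simply a weighted average of $\overline\varphi$ over $B(x,\sqrt2)$. Since $\overline\varphi$ is constant along radial rays and this ball subtends a tiny angle at $o$ by Lemma \ref{ball_angle_lemma}, the function $\overline\varphi$ is \emph{nearly constant} throughout the support, and all three estimates will follow from this near-constancy together with the exponential angular decay $\omega(\rho(x)) \le c_2 e^{-a\rho(x)}$. Write $u(x) = \int_M \chi(d(x,y)^2)\overline\varphi(y)\,dy$ and $v(x) = \int_M \chi(d(x,y)^2)\,dy$, so that $\mathscr{S}(\varphi) = u/v$. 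For $\rho(x_0)$ bounded (say $\rho(x_0) \le \sqrt2+1$) all three bounds hold trivially after enlarging $c_3$, since their right-hand sides are bounded below there and, because $S(o,1)$ has diameter $\pi$, both $\mathscr{S}(\varphi)$ and its derivatives are controlled by $|\varphi|_{Lip}$ alone; so I may assume $\rho(x_0)$ large, in particular $o \notin B(x_0,\sqrt2)$.

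Fix such an $x_0$ and set $c := \varphi(\eta_{x_0})$. For every $y$ in the support, $d(x_0,y) \le \sqrt2$, so the radial projection of $y$ to $S(o,1)$ makes angle at most $\omega(\rho(x_0)) \le c_2 e^{-a\rho(x_0)}$ with $\eta_{x_0}$; since $\overline\varphi$ is radial, the definition of the Lipschitz seminorm yields the pointwise bound $|\overline\varphi(y) - c| \le c_2\,|\varphi|_{Lip}\, e^{-a\rho(x_0)}$ on the support. Replacing $\overline\varphi$ by $\overline\varphi - c$ changes $\mathscr{S}(\varphi)$ only by the additive constant $c$ and hence does not affect its derivatives; setting $\tilde u(x) = \int_M \chi(d(x,y)^2)(\overline\varphi(y)-c)\,dy$, I obtain $\mathscr{S}(\varphi) - c = \tilde u/v$ together with $|\tilde u(x_0)| \le c_2\,|\varphi|_{Lip}\,e^{-a\rho(x_0)}\, v(x_0)$. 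Estimate (i) is then immediate upon evaluating at $x_0$ and dividing by $v(x_0)$.

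For (ii) and (iii) I differentiate under the integral sign, which is legitimate since $\chi \in C^2$ has compact support and $d(x,y)^2$ is smooth on a Hadamard manifold, using $\nabla_x \chi(d(x,y)^2) = 2\,d(x,y)\,\chi'(d(x,y)^2)\,\nabla_x d(x,y)$ and the corresponding Hessian formula. The crucial point is to bound the gradient and Hessian of $y \mapsto \chi(d(x,y)^2)$ \emph{uniformly in $y$} on the support: the first-order term is controlled by $\|\chi'\|_\infty$, while the Hessian is controlled by $\|\chi'\|_\infty,\|\chi''\|_\infty$ together with the Hessian comparison theorem, which bounds $\mathrm{Hess}_x\, d(x,y)^2$ on a Hadamard manifold with $K_M \ge -b^2$ by a constant depending only on $b$ for $d(x,y) \le \sqrt2$. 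Thus $\|\nabla\tilde u\|$ and $\|\nabla^2\tilde u\|$ are each at most $c\, c_2\,|\varphi|_{Lip}\, e^{-a\rho(x_0)}\,\mathrm{vol}(B(x_0,\sqrt2))$ --- the decay factor surviving precisely because the integrand now carries the small factor $\overline\varphi - c$ --- whereas $\|\nabla v\|,\|\nabla^2 v\| \le c\,\mathrm{vol}(B(x_0,\sqrt2))$. Dividing by $v(x_0) \ge \tfrac12\,\mathrm{vol}(B(x_0,1))$ (valid since $\chi \ge \tfrac12$ on $[0,1]$) and using the Bishop--Gromov and G\"unther volume comparisons to bound $\mathrm{vol}(B(x_0,\sqrt2))/\mathrm{vol}(B(x_0,1))$ by a constant depending only on $n,a,b$, I expand $\nabla(\tilde u/v)$ and $\nabla^2(\tilde u/v)$ by the quotient rule. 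Every resulting term contains at least one factor of the form $|\tilde u|/v$, $\|\nabla\tilde u\|/v$, or $\|\nabla^2\tilde u\|/v$, each carrying the $e^{-a\rho(x_0)}$ decay, while the remaining factors $\|\nabla v\|/v$ and $\|\nabla^2 v\|/v$ are bounded; collecting constants yields (ii) and (iii).

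I expect the main obstacle to be the second-order estimate (iii): specifically, the uniform control of $\mathrm{Hess}_x\, d(x,y)^2$ over all $y$ in the support via the Hessian comparison theorem, and the bookkeeping of the quotient-rule expansion required to check that the decay factor genuinely survives in every term. The conceptual heart, however, is the elementary but essential device of subtracting the constant $c = \varphi(\eta_{x_0})$ before differentiating: without it, the derivatives of $u$ would only be $O(\mathrm{vol}(B(x_0,\sqrt2)))$ and would give $O(1)$ bounds, whereas subtracting $c$ forces the integrand to have size $O(e^{-a\rho(x_0)})$ and transfers the angular decay of Lemma \ref{ball_angle_lemma} into all three estimates at once.
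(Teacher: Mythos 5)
Your proof is correct and follows essentially the same route as the paper: part (i) is the identical oscillation estimate via Lemma \ref{ball_angle_lemma}, and for (ii)--(iii) the paper simply defers to the computation in Anderson--Schoen (pp.~436--437), which is precisely the subtract-the-constant-and-differentiate-the-kernel argument you carry out. No gaps.
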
 
\begin{proof}
The proof of $(i)$ is a simple consequence of Lemma \ref{ball_angle_lemma}. Indeed,
\begin{eqnarray*}
\left|\mathscr{S}(\varphi)(x) - \varphi(\eta_x)\right| & \le & \frac{\int_M \chi\left({d(x,y)}^2\right) |\overline{\varphi}(y)-\overline{\varphi}(x)|\: dy}{\int_M \chi\left({d(x,y)}^2\right) dy} \\
& \le & {|\varphi|}_{Lip} \: \omega(\rho(x)) \\
& \le & c_4 \: {|\varphi|}_{Lip}\: e^{-a\rho(x)} \:.
\end{eqnarray*}

The estimates on the covariant derivatives of $\mathscr{S}(\varphi)$ ((ii) and (iii)) can be proved similarly. We refer to \cite[pp. 436-437]{AS} for the computation.
\end{proof}

One can also consider functions $\varphi$ on $\partial M$. Then $\left(\partial M,\angle_o\right)$ is again a metric space of diameter $\pi$. However $\partial M$ may only have a $C^\beta$-H\"older structure for $\beta=a/b \in (0,1]$. Then one has the following $C^\beta$-H\"older semi-norms and norms of $\varphi$ respectively:
\begin{eqnarray*}
{|\varphi|}_{C^\beta}&:=& \displaystyle\sup_{\substack{\xi \ne \eta\\ \xi, \eta \in \partial M}} \frac{|\varphi(\xi)-\varphi(\eta)|}{\left(\angle_o(\xi,\eta)\right)^{\beta}}\:, \\
{\|\varphi\|}_{C^\beta}&:=& {|\varphi|}_{C^\beta} + {\|\varphi\|}_{\infty}\:.
\end{eqnarray*}

\section{An auxiliary estimate for positive harmonic functions}
In this section, we see some estimates on the behaviour of positive harmonic functions in a cone that vanish continuously at infinity. We first cite a result by Anderson and Schoen, in this direction.
	\begin{lemma} \cite[Theorem 4.1]{AS} \label{ASthm4.1}
		Let $\theta_0 \in (0,\pi)$ and let $h$ be a positive harmonic function in the cone $C(\theta_0)$, which is continuous in the closure of $C(\theta_0)$ and which vanishes on $\overline{C(\theta_0)} \cap \partial M$. Then there exists a positive constant $c_1$ such that
		\begin{equation} \label{ASthm4.1estimate}
		\displaystyle\sup_{T\left(\theta_0/2,R_0\right)} h \le \:\:c_1 \displaystyle\sup_{\partial B_0(R_0) \cap C\left(7\theta_0/8\right)} h \:, 
		\end{equation}
		where 
		\begin{equation} \label{R0in4.1}
		R_0= c_2 \log (1/\theta_0) + c_3 \:,
		\end{equation}
		for some positive constants $c_2,c_3$\:.
	\end{lemma}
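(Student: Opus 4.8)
The plan is to dominate $h$ by a superharmonic barrier and apply the maximum principle on the truncated cone. Write $A := \sup_{\partial B_0(R_0)\cap C(7\theta_0/8)} h$ and note that since $\overline{C(\theta_0)}$ is compact, $h$ is continuous on it, and $h$ vanishes on $\overline{C(\theta_0)}\cap\partial M$, the function $h$ is bounded on the cone and extends continuously by $0$ to the whole ideal cap (including its rim at angle $\theta_0$). The goal is to produce a function $w$, superharmonic on the open region $\Omega := C(\theta_0)\setminus\overline{B(o,R_0)}$, with $w\ge h$ on the finite boundary $\partial\Omega\cap M$ (the inner cap $\partial B_0(R_0)\cap\overline{C(\theta_0)}$ together with the lateral wall $\partial C(\theta_0)\cap\{\rho\ge R_0\}$) and with $\liminf w\ge 0$ at the ideal cap. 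The maximum principle then yields $h\le w$ on $\Omega$, and \eqref{ASthm4.1estimate} follows once one checks that $w\le c_1 A$ on $T(\theta_0/2,R_0)$.

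The value of $R_0$ is dictated by geometry. By Lemma \ref{ball_angle_lemma} the ball $B(x,\sqrt2)$ subtends an angle at most $c_2 e^{-a\rho(x)}$ at $o$, so choosing $R_0 = c_2\log(1/\theta_0)+c_3$ large enough that $c_2 e^{-aR_0}<\theta_0/8$ guarantees that for every $x$ with $\rho(x)\ge R_0$ and angle at most $7\theta_0/8$ the whole ball $B(x,\sqrt2)$ stays inside $C(\theta_0)$, where $h$ is positive and harmonic. This is exactly the room needed to run interior Harnack--Yau estimates (Lemma \ref{Har-Yau_lemma}) up to the reference cap and to keep the barrier's angular profile regular there.

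For the barrier itself I would combine a radial supersolution with a smoothed angular profile. The curvature bound $K_M\le-a^2$ gives, via Hessian comparison, $\Delta\rho\ge(n-1)a$, whence for any $\sigma\in(0,(n-1)a)$ (nonempty since $n\ge2$) one has
$$\Delta e^{-\sigma\rho}=\bigl(\sigma^2-\sigma\,\Delta\rho\bigr)e^{-\sigma\rho}\le\bigl(\sigma^2-\sigma(n-1)a\bigr)e^{-\sigma\rho}\le 0,$$
so $e^{-\sigma\rho}$ is a strictly superharmonic, exponentially decaying supersolution whose negative Laplacian can absorb errors. For the angular part I would smooth, via the Anderson--Schoen convolution $\mathscr{S}$, a Lipschitz profile $\varphi$ of the angle to the cone axis that vanishes on $C(\theta_0/2)$ and rises across the buffer toward the wall; by Lemma \ref{AS_convolution_lemma}, $\mathscr{S}(\varphi)(x)$ stays within $c_3|\varphi|_{Lip}e^{-a\rho(x)}$ of $\varphi(\eta_x)$ and satisfies $|\Delta\mathscr{S}(\varphi)|\le c\,|\varphi|_{Lip}\,e^{-a\rho}$. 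Taking $w=A\bigl(\mathscr{S}(\varphi)+B e^{-\sigma\rho}\bigr)$ with $B$ comparable to $|\varphi|_{Lip}\sim\theta_0^{-1}$ makes the radial term's negative Laplacian dominate the angular error, so $w$ is superharmonic on $\Omega$; and the choice $R_0\sim\tfrac1\sigma\log(1/\theta_0)$ is precisely what renders $Be^{-\sigma\rho}=O(1)$ for $\rho\ge R_0$, forcing $w\le c_1 A$ on the narrow cone $T(\theta_0/2,R_0)$, where $\varphi\equiv0$.

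The main obstacle is the last balancing act: the barrier must exceed $h$ all along the lateral wall $\partial C(\theta_0)$, where $h$ is \emph{not} assumed small, while remaining $O(A)$ on $C(\theta_0/2)$ after the radial normalization. This is exactly what forces the three nested apertures $\theta_0/2<7\theta_0/8<\theta_0$ and the logarithmic lower bound on $R_0$. The outer buffer $[7\theta_0/8,\theta_0]$, together with the vanishing of $h$ on the full closed cap (which pins $h$ down near the wall's ends at infinity and, via interior Harnack in the balls supplied by Lemma \ref{ball_angle_lemma}, links the finite part of the wall back to the reference cap at level $O(A)$), is what lets $w$ overtake $h$ along the wall; the inner buffer and the calibration of $R_0$ keep the normalization uniform in $\theta_0$. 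Since a manifold of variable negative curvature admits no explicit superharmonic function, the genuine work is the construction of $w$ and the verification of its superharmonicity and boundary inequalities, for which the convolution estimates of Lemma \ref{AS_convolution_lemma} and the radial supersolution $e^{-\sigma\rho}$ are the essential tools; once $w$ is in hand, the maximum principle on the compact-closure cone finishes the proof at once.
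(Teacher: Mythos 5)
The paper offers no proof of this lemma at all --- it is quoted verbatim as \cite[Theorem 4.1]{AS} ("We first cite a result by Anderson and Schoen") --- so the only question is whether your sketch would actually establish it, and it would not, because of the lateral wall. Your maximum-principle region $\Omega=C(\theta_0)\setminus\overline{B(o,R_0)}$ has finite boundary consisting of the inner cap $\partial B_0(R_0)\cap\overline{C(\theta_0)}$ \emph{and} the wall $\partial C(\theta_0)\cap\{\rho\ge R_0\}$, and on the wall you need $w\ge h$. But $A=\sup_{\partial B_0(R_0)\cap C(7\theta_0/8)}h$ gives no control of $h$ on the wall: $h$ is only assumed positive and harmonic \emph{inside} $C(\theta_0)$ and to vanish on the ideal cap, so at finite points of $\partial C(\theta_0)$ it can be arbitrarily large compared with $A$ (think of $h$ close to a Green's function with pole just outside the cone, near the wall and far from $B(o,R_0)$). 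The mechanism you invoke to close this --- ``interior Harnack in the balls supplied by Lemma~\ref{ball_angle_lemma} links the finite part of the wall back to the reference cap at level $O(A)$'' --- is not available: Harnack--Yau (Lemma~\ref{Har-Yau_lemma}) requires $d(x,\partial\Omega)\ge 1$, which fails on and near the wall, and even a Harnack chain from a wall point $x$ back to $\partial B_0(R_0)$ costs a factor $e^{c(\rho(x)-R_0)}$, which is unbounded along the wall. Asserting that the wall is controlled at level $O(A)$ is essentially the statement to be proved.

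This is precisely why the cited theorem is nontrivial and why its proof in \cite{AS} is not a single application of the maximum principle: one interpolates a family of nested cones between aperture $7\theta_0/8$ and aperture $\theta_0/2$ and iterates the barrier estimate, using at each stage that the wall of the current cone lies in the \emph{interior} of the previous one, so that the uncontrolled wall term enters with a contraction factor and the contributions tied to the reference cap sum geometrically; the calibration $R_0\sim\log(1/\theta_0)$ is what makes that contraction uniform in $\theta_0$. Your barrier $w=A\bigl(\mathscr S(\varphi)+Be^{-\sigma\rho}\bigr)$ and the superharmonicity computation are exactly right for the \emph{application} of Lemma~\ref{ASthm4.1}: they are what the paper uses in Lemma~\ref{estimate_lemma}, where the needed bound for $h$ on the lateral boundary $\partial T(\theta_0/4,1)$ is supplied by inequality \eqref{Lemma_pf_eq1}, i.e.\ by Lemma~\ref{ASthm4.1} itself together with Harnack--Yau. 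Used to prove Lemma~\ref{ASthm4.1}, the same single-shot argument is circular at the wall.
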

	Applying Lemma \ref{ASthm4.1}, we get a pointwise estimate for positive harmonic functions (vanishing at infinity) in a suitable cone, in terms of the distance and the aperture of the cone. This is essentially a generalization of  \cite[Corollary 4.2]{AS}, as we deal with general apertures.  
\begin{lemma} \label{estimate_lemma}
First choose and fix $x_0 \in M$. Let $\theta_0 \in (0,\pi)$ and let $h$ be a positive harmonic function in $C(x_0, \theta_0)$ which is continuous in the closure of $C(x_0, \theta_0)$ and vanishes on $\overline{C(x_0, \theta_0)} \cap \partial M$. Then there exist positive constants $c_4$ and  $c_5$ such that
\begin{equation}
h(x) \le  c_4 {\left(\frac{1}{\theta_0}\right)}^{c_5} e^{-a d(x,x'_0)} h(x'_0) \:,
\end{equation}
for all $ x \in T(x_0,\theta_0/8, 1)$, where $x'_0=\exp_{x_0}(v_0)$ and $v_0$ is the axis vector of the cone, lying on $S(x_0,1)$.
\end{lemma}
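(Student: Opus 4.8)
The plan is to iterate the Anderson--Schoen estimate of Lemma \ref{ASthm4.1} along a sequence of nested truncated cones at increasing radii, turning the single-scale sup comparison into an exponential decay estimate in the distance, and then to convert the sup bound at a boundary sphere into a pointwise bound at the axis point $x_0'$ via the Harnack-type control of Lemma \ref{Har-Yau_lemma}. First I would normalize by working at the vertex $x_0$ (applying Lemma \ref{ASthm4.1} with $x_0$ in place of $o$, which is legitimate since the cone construction and the underlying curvature hypotheses are basepoint-independent). The key geometric input is that along the axis geodesic ray $t \mapsto \exp_{x_0}(tv_0)$, the truncated subcone $T(x_0,\theta_0/8,t)$ beyond radius $t$ still looks, after recentering at the sphere-point of radius $t$, like a cone of comparable aperture; negative curvature makes geodesic rays diverge, so the effective aperture seen from a farther base point does not shrink below a controlled multiple of $\theta_0$. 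This lets me apply Lemma \ref{ASthm4.1} repeatedly with a fixed gain $c_1 < 1$ after absorbing constants, across roughly $d(x,x_0')$-many unit steps.

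The main steps, in order, are as follows. \emph{Step 1:} Fix the aperture and set $R_0 = c_2\log(1/\theta_0) + c_3$ as in \eqref{R0in4.1}; apply Lemma \ref{ASthm4.1} once to get $\sup_{T(x_0,\theta_0/2,R_0)} h \le c_1 \sup_{\partial B_{x_0}(R_0)\cap C(x_0,7\theta_0/8)} h$. This explains the appearance of the factor $(1/\theta_0)^{c_5}$: the initial radius $R_0$ is logarithmic in $1/\theta_0$, and after iterating we will have accumulated a factor of the form $e^{a R_0}$, which is precisely $\left(1/\theta_0\right)^{a c_2}$ up to a constant $e^{a c_3}$. \emph{Step 2:} Iterate the decay along the axis, comparing the sup of $h$ over the sphere $\partial B_{x_0}(t)$ intersected with the cone to the sup over $\partial B_{x_0}(t-1)$ with a multiplicative constant strictly less than $1$; composing $\lfloor d(x,x_0')\rfloor$ such steps yields a factor $e^{-a d(x,x_0')}$ after optimizing the per-step contraction rate against the exponent $a$ (the decay rate $a$ matching the lower curvature bound $-a^2$ is exactly what one expects from the model space $\mathbb{H}^2(-a^2)$). \emph{Step 3:} Pass from the sup over the initial sphere-cap near $x_0'$ to the value $h(x_0')$ itself using Lemma \ref{Har-Yau_lemma}: since $h$ is positive and harmonic and $x_0'$ lies at distance $1$ from $x_0$ inside the cone, the gradient bound $\|\nabla \log h\| \le c_1$ gives $\sup h \le (\text{const})\, h(x_0')$ over a set of bounded diameter, which absorbs into $c_4$.

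The hard part will be \emph{Step 2}, specifically making the iteration geometrically rigorous: I must verify that when I recenter at the point $\exp_{x_0}(t v_0)$ on the axis, the tail cone $T(x_0,\theta_0/8,t+R_0)$ is contained in a cone $C(\exp_{x_0}(tv_0), \theta_0')$ with vertex at the new center and with aperture $\theta_0'$ bounded below by a fixed fraction of $\theta_0$ (so that the radius threshold $R_0' = c_2\log(1/\theta_0') + c_3$ from \eqref{R0in4.1} does not blow up as I iterate). This is where the comparison geometry of the previous section and the angle-decay Lemma \ref{ball_angle_lemma} enter: the pinching $-b^2 \le K_M \le -a^2$ controls how a cone opens up under recentering along a geodesic, and I expect the containment to follow from Alexandrov-type angle comparison in $\mathbb{H}^2(-a^2)$ and $\mathbb{H}^2(-b^2)$. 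Once the nesting with controlled aperture is established, the composition of constant-ratio estimates and the bookkeeping of the factor $e^{a R_0} = c\,(1/\theta_0)^{c_5}$ is routine, and the restriction $x \in T(x_0,\theta_0/8,1)$ (the smaller aperture $\theta_0/8$ rather than $\theta_0/2$) provides the margin needed to keep $x$ safely interior to every cone in the iteration.
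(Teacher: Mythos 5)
There is a genuine gap in Step 2, which is the heart of your argument. You propose to iterate Lemma \ref{ASthm4.1} along the axis with ``a fixed gain $c_1<1$ after absorbing constants,'' but Lemma \ref{ASthm4.1} only provides \emph{some} positive constant $c_1$, and a constant of this Harnack/maximum-principle type is in general $\geq 1$; nothing in the hypotheses forces a contraction. Since $h$ vanishes only on $\overline{C(x_0,\theta_0)}\cap\partial M$ and not on the lateral boundary of the cone, there is no loss-of-mass or oscillation-decay mechanism available at each unit step, so composing $N\approx d(x,x_0')$ applications of the lemma yields a factor $c_1^{N}$ that may grow rather than decay. Relatedly, ``optimizing the per-step contraction rate against the exponent $a$'' cannot produce the specific rate $a$: even if some contraction held, its rate would be an unspecified constant depending on $n,a,b$, whereas the exponent $a$ in the statement has a concrete geometric origin (the divergence of geodesics under the upper curvature bound $-a^2$, as in Lemma \ref{ball_angle_lemma}).

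The paper's proof uses Lemma \ref{ASthm4.1} only \emph{once}, combined with Harnack--Yau, to establish the flat bound $\sup_{T(\theta_0/4,1)}h\le c\,e^{cR_0}h(x_0')$ with $R_0\asymp\log(1/\theta_0)$ (this is where the factor $(1/\theta_0)^{c_5}$ comes from, as you correctly anticipated). The exponential decay is then obtained by a completely different device: one builds an explicit nonnegative superharmonic barrier
\begin{equation*}
g(x)=c\,\theta_0^{-1}e^{-a\rho(x)}+\mathscr{S}(\varphi)(x)
\end{equation*}
from the Anderson--Schoen convolution of an angular cutoff $\varphi$ supported outside angle $\theta_0/8$ (superharmonicity follows from Lemma \ref{AS_convolution_lemma} for $c$ large), checks that $h\le c\,e^{cR_0}h(x_0')\,g$ on the boundary of $T(\theta_0/4,1)$, and invokes the maximum principle to propagate this to the whole truncated cone; on the inner cone $C(\theta_0/8)$ the barrier itself satisfies $g(x)\le c\,\theta_0^{-1}e^{-a\rho(x)}$, which delivers the decay rate $a$. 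Your Step 1 and Step 3 are consistent with this, but without the barrier (or some substitute supplying a genuine per-step contraction), Step 2 does not go through.
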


\begin{proof}

Without loss of generality, we will prove Lemma \ref{estimate_lemma} for $x_0=o$. By Lemma \ref{ASthm4.1}, we have
\begin{equation}\label{Lemma_pf_eq1}
\displaystyle\sup_{T\left(\theta_0/4,R_0\right)} h \le \:\:c_6 \displaystyle\sup_{\partial B_0(R_0) \cap C\left(7\theta_0/16\right)} h \:, 
\end{equation}
where 
\begin{equation} \label{Lemma_pf_eq2}
R_0=c_7\log(2/\theta_0)+c_8\:.
\end{equation}
Now by Harnack-Yau (Lemma \ref{Har-Yau_lemma}), 
\begin{equation}\label{Lemma_pf_eq3}
\displaystyle\sup_{T(\theta_0/4,1) \cap \overline{B(o,R_0)}} h \le \:\: e^{c_9(R_0+1)}  h(x'_0)\:.
\end{equation}
Hence combining (\ref{Lemma_pf_eq1}) and (\ref{Lemma_pf_eq3}) we have
\begin{equation}\label{Lemma_pf_eq4}
\sup_{T(\theta_0/4,1)} h \le c_{10}\:e^{c_9R_0} h(x'_0)\:.
\end{equation}

\medskip

Next let $\theta$ denote the Riemannian angle in the unit sphere $S(o,1)$ measured from $v_0$ (the axis vector of the cones) and let $\varphi$ be a Lipschitz function of $\theta$ such that 
\begin{equation*}
\varphi(\theta)= \begin{cases}
0  &\text{if } \theta\le \frac{\theta_0}{8},\\
	\frac{\theta}{\theta_0/8}-1  &\text{if } \frac{\theta_0}{8}\le\theta\le\frac{\theta_0}{4},\\
1  &\text{if } \theta\ge \frac{\theta_0}{4}.
\end{cases}
\end{equation*}
We note that 
\begin{equation*}
{|\varphi|}_{Lip} \le c_{11}\theta^{-1}_0\:.
\end{equation*}

\begin{figure}
	\centering
	\includegraphics[width=.7\linewidth]{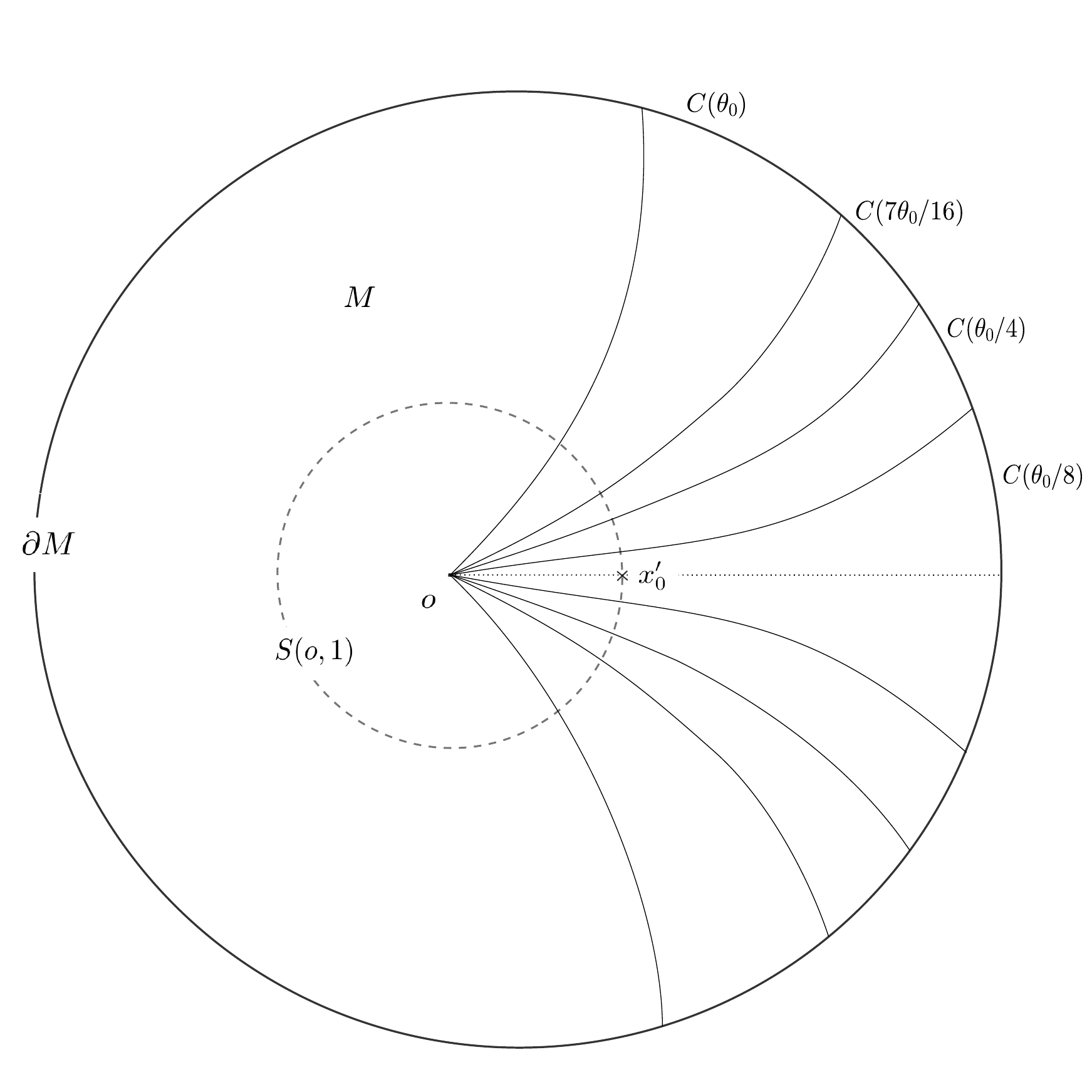}
	\caption{}
	\label{fig:drawing}
\end{figure}

\medskip

Now let us define, 
\begin{equation*}    
g(x):=c_{12}\:\theta^{-1}_0e^{-a\rho(x)}+\mathscr S (\varphi)(x) \text{ on } M,
\end{equation*}
where $\mathscr S (\varphi)$ is the Anderson-Schoen convolution of $\varphi$ and using Lemma \ref{AS_convolution_lemma}, $c_{12}$ is chosen large enough so that
\begin{equation*}
\Delta g\le 0\:,\: \text{on } C(\theta_0/4).
\end{equation*}

We next note that as $\overline{\varphi}$ (the extension of $\varphi$ along the radial geodesic rays) vanishes in the cone $C(\theta_0/8)$, by Lemma \ref{AS_convolution_lemma}, it follows that for all $x \in C(\theta_0/8)$,
\begin{equation*}
\mathscr S (\varphi)(x) \le \frac{c_{13}\: e^{-a \rho(x)}}{\theta_0} \:.
\end{equation*}
Thus there exists a positive constant $c_{14}$ such that for all $x \in C(\theta_0/8)$,
\begin{equation} \label{Lemma_pf_eq5}
g(x) \le \frac{c_{14}\: e^{-a \rho(x)}}{\theta_0} \:.
\end{equation}

We now want to prove the following estimate  for all $x \in \partial T(\theta_0/4,1) \cup (C(\theta_0/4) \cap S(o,1))$,
\begin{equation} \label{Lemma_pf_eq6}
 h(x) \le c_{15}\:e^{c_9R_0}\: h(x'_0)\:g(x)\:.
\end{equation}
In view of (\ref{Lemma_pf_eq4}), in order to prove (\ref{Lemma_pf_eq6}), it suffices to show that there exists $R_1\ge 1$, such that
\begin{itemize}
\item[(i)] $\mathscr S (\varphi) \ge 1/2$ on $\partial T(\theta_0/4,R_1)$ and
\item[(ii)] $c_{12}\:\theta^{-1}_0e^{-a\rho(x)} \ge 1/2$ for $x \in \overline{C(\theta_0/4)} \cap \overline{B(o,R_1)}$.
\end{itemize}

\medskip

We note from the definition of $\mathscr S (\varphi)$ that to get $(i)$, it suffices to have 
\begin{equation*}
\overline{\varphi} \ge 1/2\:,\: \text{ on } B(x,\sqrt{2})\:,
\end{equation*}
for all $x \in \partial T(\theta_0/4,R_1)$, which in turn follows (by the definition of $\varphi$) if the angle subtended by the ball $B(x,\sqrt{2})$ at $o$ (as defined in Lemma \ref{ball_angle_lemma}), $\omega(\rho(x))$ satisfies
\begin{equation*}
\omega(\rho(x)) \le \theta_0/16 \:.
\end{equation*}
Now by Lemma \ref{ball_angle_lemma}, we have 
\begin{equation*}
\omega(\rho(x)) \le c_{16}\: e^{-a \rho(x)}\:,
\end{equation*}
where $c_{16}$ is the constant appearing in the conclusion of Lemma \ref{ball_angle_lemma}. We further note that for $x \in \partial T(\theta_0/4,R_1)$, $e^{-a \rho(x)} \le e^{-a R_1}$ and thus we obtain a sufficient condition 
\begin{equation*}
c_{16} e^{-a R_1} \le \theta_0/16 \:,
\end{equation*}
which is equivalent to the following:
\begin{equation} \label{Lemma_pf_eq7}
R_1 \ge \frac{1}{a} \log \left(\frac{16c_{16}}{\theta_0}\right)\:.
\end{equation}

Now to get $(ii)$, we make the simple observation that for $x \in \overline{B(o,R_1)}$, $e^{-a \rho(x)} \ge e^{-a R_1}$ and hence we note the following sufficient condition:
\begin{equation*} 
c_{12}\:\theta^{-1}_0e^{-a R_1} \ge 1/2\:,
\end{equation*}
which in turn is equivalent to:
\begin{equation} \label{Lemma_pf_eq8}
R_1 \le \frac{1}{a} \log \left(\frac{2c_{12}}{\theta_0}\right)\:.
\end{equation}
Then we note that 
\begin{equation*}
R_1 = \frac{1}{2a} \log \left(\frac{16c_{16}}{\theta_0}\right) + \frac{1}{2a} \log \left(\frac{2c_{12}}{\theta_0}\right)\:,
\end{equation*}
satisfies both (\ref{Lemma_pf_eq7}) and (\ref{Lemma_pf_eq8}) and this completes the proof of $(i)$ and $(ii)$. Thus by taking $c_{15}=2c_{10}$, we get the estimate (\ref{Lemma_pf_eq6}).

\medskip

Now as $h$ vanishes on $\overline{C(\theta_0/4)} \cap \partial M$, and $g$ is a non-negative superharmonic function in $C(\theta_0/4)$ with boundary values $\varphi$ on $\overline{C(\theta_0/4)} \cap \partial M$, (\ref{Lemma_pf_eq6}) and the maximum principle imply
\begin{equation} \label{Lemma_pf_eq9}
 h(x) \le c_{15}\:e^{c_9R_0}\: h(x'_0)\:g(x)\:,
\end{equation}
for all $x \in T(\theta_0/4,1)$. Thus combining (\ref{Lemma_pf_eq5}), (\ref{Lemma_pf_eq9}) and using the expression (\ref{Lemma_pf_eq2}), we get the desired estimate:
\begin{equation*}
h(x) \le c_{17} {\left(\frac{1}{\theta_0}\right)}^{c_{18}} e^{-a \rho(x)} h(x'_0) \:,
\end{equation*}
for all $x \in T(\theta_0/8,1)$.
\end{proof}

\section{Riemannian angle and Gromov product}
In this section, we obtain useful relations between Riemannian angles and suitable Gromov products. These geometric inequalities are familiar to experts. Nevertheless we present them here for the sake of completeness. 

\medskip

We first obtain the following lower bound for the Gromov product between two points on the Gromov boundary $\partial M$, in terms of the Riemannian angle between them:  
	\begin{lemma} \label{angle_gromov_rel1}
		Let $M$ be a Hadamard manifold of dimension $\ge 2$, with sectional curvature bounds $-b^2 \le K_M \le 0$. Fix $x \in M$. Let $\eta , \xi \in \partial M$ such that $\eta \ne \xi$ and let $\theta$ denote the Riemannian angle between $\eta$ and $\xi$ subtended at $x$. Then one has,
		\begin{equation*}
		e^{-b{(\xi | \eta )}_x} \le \sin \frac{\theta}{2} \ .
		\end{equation*}
	\end{lemma}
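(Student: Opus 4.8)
The plan is to reduce the inequality to a single computation in the comparison space $\mathbb{H}^2(-b^2)$ via the hyperbolic law of cosines, using the lower curvature bound $-b^2 \le K_M$ to control the angle. First I would fix unit-speed geodesic rays $\gamma_1, \gamma_2 : [0,\infty) \to M$ with $\gamma_1(0) = \gamma_2(0) = x$, $\gamma_1(\infty) = \xi$ and $\gamma_2(\infty) = \eta$, so that the angle between their initial tangent vectors is exactly $\theta$. Since the Gromov product extends continuously to $\overline{M} \times \overline{M}$ (as recalled in the preliminaries), I can evaluate it along these two rays:
$$
(\xi|\eta)_x = \lim_{t\to\infty} (\gamma_1(t) \mid \gamma_2(t))_x = \lim_{t\to\infty}\left( t - \tfrac{1}{2}\,d(\gamma_1(t),\gamma_2(t)) \right),
$$
using $d(x,\gamma_1(t)) = d(x,\gamma_2(t)) = t$.

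Next I would form, for each $t$, the geodesic triangle with vertices $x, \gamma_1(t), \gamma_2(t)$ together with its comparison triangle in $\mathbb{H}^2(-b^2)$ having the same three side lengths. Writing $\theta_b(t)$ for the comparison angle at the vertex corresponding to $x$, Alexandrov's angle comparison theorem, in the form quoted earlier and valid because $K_M \ge -b^2$, gives $\theta_b(t) \le \theta$. The law of cosines in $\mathbb{H}^2(-b^2)$ applied to this comparison triangle, whose two legs have length $t$ and whose apex angle is $\theta_b(t)$, reads
$$
\cosh\!\big(b\, d(\gamma_1(t),\gamma_2(t))\big) = \cosh^2(bt) - \sinh^2(bt)\cos\theta_b(t).
$$
Since $\theta_b(t) \le \theta$ forces $\cos\theta_b(t) \ge \cos\theta$, and $\cosh$ is increasing on $[0,\infty)$, this produces the upper bound
$$
\cosh\!\big(b\, d(\gamma_1(t),\gamma_2(t))\big) \le \cosh^2(bt) - \sinh^2(bt)\cos\theta,
$$
hence an upper bound on $d(\gamma_1(t),\gamma_2(t))$ and therefore a lower bound on $t - \tfrac12 d(\gamma_1(t),\gamma_2(t))$.

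Finally I would let $t \to \infty$. Using $\cosh^2(bt) - \sinh^2(bt)\cos\theta \sim \tfrac14 e^{2bt}(1-\cos\theta)$ together with $1 - \cos\theta = 2\sin^2(\theta/2)$, the bound above expands as $d(\gamma_1(t),\gamma_2(t)) \le 2t + \tfrac1b \log \sin^2(\theta/2) + o(1)$. Substituting into the formula for the Gromov product yields $(\xi|\eta)_x \ge -\tfrac1b \log \sin(\theta/2)$, which rearranges to exactly $e^{-b(\xi|\eta)_x} \le \sin(\theta/2)$. The two points requiring care are the justification for evaluating the boundary Gromov product along the sequences $\gamma_1(t), \gamma_2(t)$, which is supplied by continuity of the extended Gromov product, and the asymptotic bookkeeping needed to see that the error terms vanish in the limit; the entire geometric content is carried by the single comparison inequality $\theta_b(t) \le \theta$. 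I expect the asymptotic bookkeeping to be the main routine obstacle, while the conceptual step is just the one comparison estimate.
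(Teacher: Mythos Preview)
Your proposal is correct and follows essentially the same approach as the paper: both arguments pass to comparison triangles in $\mathbb{H}^2(-b^2)$, invoke Alexandrov's inequality $\theta_b(t)\le\theta$, apply the hyperbolic law of cosines, and let $t\to\infty$. The only cosmetic difference is that the paper uses the half-angle form $\sin^2(\theta_b^t/2)=\dfrac{\cosh b\,d(\gamma_1(t),\gamma_2(t))-1}{2\sinh^2(bt)}$ and bounds this ratio below by $e^{-2b(\xi|\eta)_x}$ directly, whereas you invert the same identity to bound $d(\gamma_1(t),\gamma_2(t))$ from above and then read off the Gromov product; the content is identical.
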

	\begin{proof}
		Let $\gamma_1$ and $\gamma_2$ be the two geodesic rays starting at $x$ that hits $\partial M$ at $\eta$ and $\xi$ respectively. Now for $t\in (0,\infty)$ we consider the geodesic triangle $\Delta(x,\gamma_1(t),\gamma_2(t))$. Then let $\theta^t_b$ be the angle corresponding to $\theta$ in the comparison triangle $\overline{\Delta}(x,\gamma_1(t),\gamma_2(t))$ in $\mathbb H^2(-b^2)$. The curvature pinching condition yields $\theta^t_b \le \theta$ and hence
		\begin{equation*}
	\sin\frac{\theta^t_b}{2}\le \sin\frac{\theta}{2} \quad \text{for all} \quad t\in (0,\infty)\:.
		\end{equation*} 
		Now by the hyperbolic law of cosines,
		\begin{equation*}
		\sin^2\frac{\theta^t_b}{2}=\frac{\cosh bd(\gamma_1(t),\gamma_2(t))-\cosh b(d(\gamma_1(t),x)-d(\gamma_2(t),x))}{2\sinh bd(\gamma_1(t),x)\sinh bd(\gamma_2(t),x)}\:.
		\end{equation*}
		Then for large $t$,
		\begin{align*}
		\frac{\cosh bd(\gamma_1(t),\gamma_2(t))}{2\sinh bd(\gamma_1(t),x)\sinh bd(\gamma_2(t),x)}&\ge e^{-b\{d(\gamma_2(t),x)+d(\gamma_1(t),x)-d(\gamma_2(t),\gamma_1(t))\}}\\
		& =e^{-2b{\left(\gamma_2(t)|\gamma_1(t)\right)}_x}\\
		& \ge e^{-2b{(\xi|\eta)}_x}\:,
		\end{align*}
		and 
		\begin{align*}
		\frac{\cosh b(d(\gamma_1(t),x)-d(\gamma_2(t),x))}{2\sinh bd(\gamma_1(t),x)\sinh bd(\gamma_2(t),x)}&=\frac{1}{2\sinh (bt)\sinh (bt)}
		\to 0 \quad \text{( as $t\to\infty )$} \:.
		\end{align*}
		Hence from the above we have 
		\begin{equation*}
		e^{-b{(\xi | \eta )}_x} \le \sin \frac{\theta}{2} \:.
		\end{equation*}
		
	\end{proof}
	Next we consider the case when one point is in $M$, while the other one is on $\partial M$.
	\begin{lemma} \label{angle_gromov_rel2}
		Let $M$ be a Hadamard manifold of dimension $\ge 2$, with sectional curvature bounds $-b^2 \le K_M \le 0$. Fix $x \in M$. Let $y \in M$ and $\xi \in \partial M$ such that $x,y$ and $\xi$ are not collinear. Then the Riemannian angle between $y$ and $\xi$ subtended at $x$, say $\theta$, satisfies 
		\begin{equation*}
		e^{-2b{(y|\xi)}_x} - e^{-2bd(x,y)} \le \sin^2 \frac{\theta}{2} \:.
		\end{equation*}
	\end{lemma}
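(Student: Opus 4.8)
The plan is to run the argument of Lemma~\ref{angle_gromov_rel1}, but with the finite point $y$ playing the role of one of the two boundary points. Let $\gamma$ be the unit-speed geodesic ray with $\gamma(0)=x$ and $\gamma(\infty)=\xi$, and for $t>0$ form the geodesic triangle $\Delta(x,y,\gamma(t))$. Because $\gamma$ points from $x$ towards $\xi$ and $[x,y]$ is the segment from $x$ to $y$, the Riemannian angle at $x$ in this triangle is exactly $\theta$ for every $t$. Taking the comparison triangle $\overline{\Delta}(x,y,\gamma(t))$ in $\mathbb H^2(-b^2)$ and denoting by $\theta^t_b$ its angle at the vertex corresponding to $x$, the hypothesis $K_M\le 0$ together with Alexandrov's angle comparison yields $\theta^t_b\le\theta$, hence $\sin^2\frac{\theta^t_b}{2}\le\sin^2\frac{\theta}{2}$ for all $t>0$.

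The next step is the half-angle form of the hyperbolic law of cosines applied to $\overline{\Delta}(x,y,\gamma(t))$, whose side lengths are $d(x,y)$, $d(x,\gamma(t))=t$ and $d(y,\gamma(t))$; exactly as in Lemma~\ref{angle_gromov_rel1} this gives
\[
\sin^2\frac{\theta^t_b}{2}=\frac{\cosh\!\big(b\,d(y,\gamma(t))\big)-\cosh\!\big(b(d(x,y)-t)\big)}{2\sinh\!\big(b\,d(x,y)\big)\,\sinh(bt)}.
\]
The essential difference from Lemma~\ref{angle_gromov_rel1} is that here the subtracted cosine term no longer tends to $0$ as $t\to\infty$, so discarding it term by term would be too lossy; instead I would compute the exact limit of the right-hand side. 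Using $\cosh u\sim\tfrac12 e^{u}$ and $\sinh u\sim\tfrac12 e^{u}$, together with the fact that $\gamma(t)\to\xi$ forces $d(y,\gamma(t))-t=d(x,y)-2(y|\gamma(t))_x\to d(x,y)-2(y|\xi)_x$ by continuity of the Gromov product on $\overline M$, both fractions converge and one obtains
\[
\lim_{t\to\infty}\sin^2\frac{\theta^t_b}{2}=\frac{e^{-2b(y|\xi)_x}-e^{-2b\,d(x,y)}}{1-e^{-2b\,d(x,y)}}.
\]

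To finish, I would use two elementary facts. Since the Gromov product never exceeds the distance, $(y|\xi)_x\le d(x,y)$, so the numerator $e^{-2b(y|\xi)_x}-e^{-2b\,d(x,y)}$ is nonnegative; and since $0<1-e^{-2b\,d(x,y)}\le 1$, dividing this nonnegative numerator by it only enlarges it. Therefore the limit above is at least $e^{-2b(y|\xi)_x}-e^{-2b\,d(x,y)}$. Combining with $\sin^2\frac{\theta}{2}\ge\sin^2\frac{\theta^t_b}{2}$ for every $t$ and letting $t\to\infty$ gives the claimed inequality. The one delicate point is precisely the non-vanishing second term: one must evaluate the full limit and then absorb the harmless factor $1/(1-e^{-2b\,d(x,y)})\ge 1$, which is what converts the limit into the clean bound $e^{-2b(y|\xi)_x}-e^{-2b\,d(x,y)}$.
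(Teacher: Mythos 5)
Your proof is correct and follows essentially the same route as the paper: comparison triangle in $\mathbb H^2(-b^2)$, Alexandrov angle comparison, the half-angle hyperbolic law of cosines, evaluation of the limit as $t\to\infty$ (yielding $\bigl(e^{-2b(y|\xi)_x}-e^{-2bd(x,y)}\bigr)/\bigl(1-e^{-2bd(x,y)}\bigr)$), and finally the observation that dividing the nonnegative numerator by $1-e^{-2bd(x,y)}\le 1$ only enlarges it. The paper computes the two limits separately rather than the full ratio at once, but this is a cosmetic difference.
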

	\begin{proof}
		Let $\gamma$ denote the geodesic ray starting from $x$ and hitting $\partial M$ at $\xi$. Now for any $t\in (0,\infty)$, we consider the geodesic triangle $\Delta(x,y,\gamma_2(t))$. Then let $\theta^t_b$ be the angle corresponding to $\theta$ in the comparison triangle $\overline{\Delta}(x,y,\gamma(t))$ in $\mathbb H^2(-b^2)$. Again as in the previous lemma the curvature pinching condition gives us 
		\begin{equation*}
		\sin\frac{\theta^t_b}{2}\le \sin\frac{\theta}{2} \quad \text{for all} \quad t\in (0,\infty)\:.
		\end{equation*} 
		Then by the hyperbolic law of cosines,
		\begin{equation*}
		\sin^2\frac{\theta^t_b}{2}=\frac{\cosh bd(y,\gamma(t))-\cosh b(d(y,x)-d(\gamma(t),x))}{2\sinh bd(y,x)\sinh bd(\gamma(t),x)}
		\end{equation*}
		Now
		\begin{align*}
		&\lim_{t\to\infty} \frac{\cosh bd(y,\gamma(t))}{2\sinh bd(y,x)\sinh bd(\gamma(t),x)}\\
		=&\lim_{t\to\infty} \frac{e^{bd(y,\gamma(t))}+e^{-bd(y,\gamma(t))}}{(e^{bd(y,x)}-e^{-bd(y,x)})(e^{bd(x,\gamma(t))}-e^{-bd(x,\gamma(t))})}\\
		=&\frac{e^{-2b(y|\xi)_x}}{1-e^{-2bd(x,y)}}\:,
		\end{align*}
		and
		\begin{align*}
		&\lim_{t\to\infty} \frac{\cosh b(d(y,x)-d(\gamma(t),x))}{2\sinh bd(y,x)\sinh bd(\gamma(t),x)}\\
		=&\lim_{t\to\infty} \frac{e^{b(d(y,x)-d(\gamma(t),x))}+e^{-b(d(y,x)-d(\gamma(t),x))}}{(e^{bd(y,x)}-e^{-bd(y,x)})(e^{bd(x,\gamma(t))}-e^{-bd(x,\gamma(t))})}\\
		=&\frac{e^{-2bd(x,y)}}{1-e^{-2bd(x,y)}}\:.
		\end{align*}
		Hence,
		\begin{equation*}
		e^{-2b{(y|\xi)}_x} - e^{-2bd(x,y)} \le\frac{e^{-2b{(y|\xi)}_x} - e^{-2bd(x,y)}}{1-e^{-2bd(x,y)}} \le \sin^2 \frac{\theta}{2} \:.
		\end{equation*}
		\end{proof} 
Now in the case when the manifold satisfies $K_M \le -a^2$ instead, doing angle comparison in $\mathbb{H}^2(-a^2)$ and then proceeding exactly as in Lemma \ref{angle_gromov_rel2}, we get the following:
\begin{lemma} \label{angle_gromov_rel3}
		Let $M$ be a Hadamard manifold of dimension $\ge 2$, with sectional curvature $K_M \le -a^2$. Fix $x \in M$. Let $y \in M$ and $\xi \in \partial M$ such that $x,y$ and $\xi$ are not collinear. Then the Riemannian angle between $y$ and $\xi$ subtended at $x$, say $\theta$, satisfies 
		\begin{equation*}
		\sin^2 \frac{\theta}{2} \le \frac{e^{-2a{(y|\xi)}_x} - e^{-2ad(x,y)}}{1-e^{-2ad(x,y)}}\:.
		\end{equation*}
	\end{lemma}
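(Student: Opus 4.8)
The plan is to mirror the proof of Lemma~\ref{angle_gromov_rel2} step by step, replacing the lower curvature bound by the upper bound $K_M \le -a^2$ and performing angle comparison in $\mathbb{H}^2(-a^2)$ rather than $\mathbb{H}^2(-b^2)$. The decisive change is that the Alexandrov comparison inequality now runs in the opposite direction, which converts the lower bound obtained in Lemma~\ref{angle_gromov_rel2} into an upper bound.

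First I would let $\gamma$ be the geodesic ray issuing from $x$ with $\gamma(\infty) = \xi$, and for each $t \in (0,\infty)$ form the geodesic triangle $\Delta(x, y, \gamma(t))$. Let $\theta^t_a$ denote the comparison angle at $\overline{x}$ in the comparison triangle $\overline{\Delta}(x, y, \gamma(t))$ in $\mathbb{H}^2(-a^2)$. Because $K_M \le -a^2$, the angle comparison runs in the reverse direction to the one used in Lemma~\ref{angle_gromov_rel2} (as recorded in the preliminaries), so $\theta \le \theta^t_a$; since both angles lie in $[0,\pi]$, their halves lie in $[0,\pi/2]$ where $\sin$ is increasing, and hence $\sin(\theta/2) \le \sin(\theta^t_a/2)$ for all $t$.

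Next I would apply the hyperbolic law of cosines in $\mathbb{H}^2(-a^2)$ to express
\begin{equation*}
\sin^2\frac{\theta^t_a}{2} = \frac{\cosh a\, d(y,\gamma(t)) - \cosh a\,(d(y,x) - d(\gamma(t),x))}{2 \sinh a\, d(y,x)\, \sinh a\, d(\gamma(t),x)},
\end{equation*}
and then let $t \to \infty$. The two limit computations are identical to those in Lemma~\ref{angle_gromov_rel2}, with $a$ in place of $b$: the first quotient tends to $e^{-2a(y|\xi)_x}/(1 - e^{-2ad(x,y)})$ and the second to $e^{-2ad(x,y)}/(1 - e^{-2ad(x,y)})$, using $d(x,\gamma(t)) = t \to \infty$ and the definition of the Gromov product as a Busemann-type limit along $\gamma$. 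Passing to the limit in $\sin^2(\theta/2) \le \sin^2(\theta^t_a/2)$ then yields the claimed bound.

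Since the structure is an exact mirror of Lemma~\ref{angle_gromov_rel2}, I do not expect any genuine obstacle. The only points requiring care are to keep track of the reversed direction of the Alexandrov inequality so that the final estimate emerges as an upper bound on $\sin^2(\theta/2)$, and to \emph{retain} the denominator $1 - e^{-2ad(x,y)}$ rather than discarding it (as was done to clean up the lower bound in Lemma~\ref{angle_gromov_rel2}), since here it is precisely the factor produced by the limit and gives the sharp right-hand side.
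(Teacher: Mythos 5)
Your proposal is correct and is exactly the paper's argument: the paper proves this lemma in one line by saying to do the angle comparison in $\mathbb{H}^2(-a^2)$ and proceed exactly as in Lemma~\ref{angle_gromov_rel2}, which is precisely the mirroring you carry out, including the reversed Alexandrov inequality and the retention of the denominator $1-e^{-2ad(x,y)}$.
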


	\section{Lower bound for the Poisson kernel}
	The aim of this section is to prove the lower bound in (\ref{Poisson_estimate}). We first choose and fix $\xi \in \partial M$. Next consider the geodesic ray that joins $\xi$ to $o$, and then extend it beyond $o$. This extended bi-infinite geodesic ray will hit $\partial M$ at another point, which we denote by $\xi^-$. Then depending on the location of $x \in M$, we break the proof into two cases:
	\begin{itemize}
		\item $x \in \mathscr{C}(\xi^-,\pi/4)$ 
		\item $x \notin \mathscr{C}(\xi^-,\pi/4)$ \:.
	\end{itemize}
	\subsection{Inside the cone neighbourhood $\mathscr{C}(\xi^-,\pi/4)$ :}
	As $x$ is in the cone neighbourhood $\mathscr{C}(\xi^-,\pi/4)$, there exists $c_1 >0$, such that
	\begin{equation*}
	{(x|\xi)}_o \le c_1 \:.
	\end{equation*}
	Combining the above with the fact that 
	\begin{equation*}
	{(o|\xi)}_x - \rho (x) = - {(x|\xi)}_o \:,
	\end{equation*}
	it follows that
	\begin{equation*}
	{(o|\xi)}_x \ge \rho(x) -c_1 \:.
	\end{equation*}
	Hence for any $\tau > 0$, one has
	\begin{equation} \label{lower_eq1}
	e^{-\tau {(o|\xi)}_x} \: e^{a \rho(x)} \le e^{\tau c_1} \:e^{-(\tau - a) \rho(x)} \:.
	\end{equation}

\medskip

Now by Harnack-Yau (Lemma \ref{Har-Yau_lemma}), there exists $c_2 >0$, such that the Poisson kernel satisfies
	\begin{equation} \label{lower_eq2}
	e^{-c_2 \rho(x)} \le P(x,\xi) \:.
	\end{equation}
	
\medskip
	
We also note that for $c_3 \ge c_2 + a$, one has
	\begin{equation} \label{lower_eq3}
	e^{-(c_3 - a) \rho(x)} \le e^{-c_2 \rho(x)} \:.
	\end{equation}
	Then putting $c_4=e^{c_1c_3}$ and combining (\ref{lower_eq1})-(\ref{lower_eq3}), it follows that
	\begin{equation*}
	\frac{1}{c_4}\: e^{-c_3{(o|\xi)}_x} \: e^{a \rho(x)} \le \:e^{-(c_3 - a) \rho(x)} \le e^{-c_2 \rho(x)} \le P(x,\xi) \:.
	\end{equation*}
	\subsection{Outside the cone neighbourhood $\mathscr{C}(\xi^-,\pi/4)$ :}
	We consider the geodesic segment joining $x$ to $o$, say $\gamma$, and also the geodesic segment in the reverse direction that joins $o$ to $x$, say $\tilde{\gamma}$. First extending $\gamma$ (beyond $o$) we obtain a unique point on $\partial M$, say $\eta_x$. We next extend $\tilde{\gamma}$ beyond $x$ and consider the unique point $\tilde{x}$ on $S(x,1)$ lying on the extended geodesic segment. Then as $x$ lies outside the cone neighbourhood $\mathscr{C}(\xi^-,\pi/4)$, $\eta_x$ also stays outside a fixed cone neighbourhood of $\xi$ and hence, there exists $c_5 > 0$, such that
	\begin{equation*}
	{(\xi | \eta_x)}_o \le c_5 \:.
	\end{equation*}  
	Now combining this with the fact that 
	\begin{equation*}
	{(\xi|\eta_x)}_{\tilde{x}} = {(o|\xi)}_{\tilde{x}}  + {(\xi|\eta_x)}_o \:,
	\end{equation*}
	it follows that
	\begin{equation*}
	{(\xi|\eta_x)}_{\tilde{x}} \le {(o|\xi)}_{\tilde{x}}  + c_5 \:.
	\end{equation*}
	Hence for any $\tau > 0$, one has
	\begin{equation} \label{lower_eq4}
	e^{-\tau {(o|\xi)}_{\tilde{x}}} \le e^{\tau c_5} \: e^{-\tau {(\xi|\eta_x)}_{\tilde{x}}} \:.
	\end{equation}
	Let $\theta$ be the Riemannian angle between $\xi$ and $\eta_x$ subtended at ${\tilde{x}}$ (see \Cref{fig:Lower bound}). By Lemma \ref{angle_gromov_rel1}, it follows that
	\begin{equation} \label{lower_eq5}
	e^{-b{(\xi|\eta_x)}_{\tilde{x}}} \le \sin \frac{\theta}{2} \:.
	\end{equation}
	
	\begin{figure}
		\centering
		\includegraphics[width=.7\linewidth]{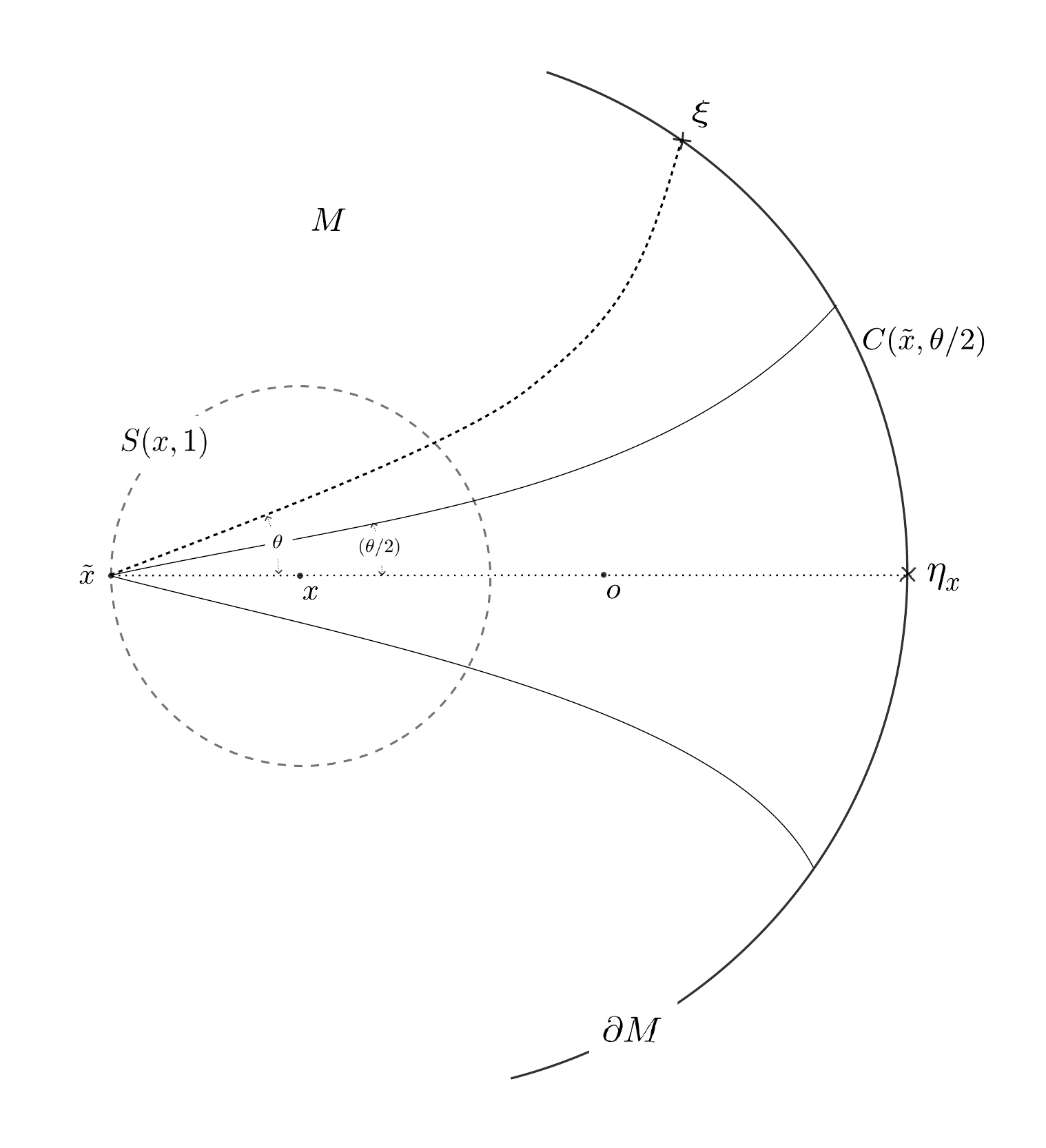}
		\caption{}
		\label{fig:Lower bound}
	\end{figure}
	
	Now by applying Lemma \ref{estimate_lemma} on the Poisson kernel for the cone with vertex ${\tilde{x}}$ and aperture $\theta/2$ with respect to the axis being the geodesic ray joining ${\tilde{x}}$ to $\eta_x$, we get that there exist two positive constants $c_6$ and $c_7$, such that
	\begin{equation} \label{lower_eq6}
	\frac{1}{c_6} \:\theta^{c_7} e^{a \rho(x)} \le P(x,\xi) \:.
	\end{equation}
Note that above we have used that $P(o,\cdot)\equiv 1$. Now putting $\tau=bc_7$ in (\ref{lower_eq4}) and then combining it with (\ref{lower_eq5}), one has
	\begin{eqnarray} \label{lower_eq7}
	e^{-bc_7{(o|\xi)}_{\tilde{x}}} &\le & e^{bc_5c_7} e^{-bc_7{(\xi|\eta_x)}_{\tilde{x}}} \nonumber \\
	& \le & e^{bc_5c_7} {\left(\sin \frac{\theta}{2}\right)}^{c_7} \nonumber \\
	& \le & \left(\frac{e^{bc_5c_7}}{2^{c_7}}\right) \theta^{c_7} \nonumber \\
	&=& c_8 \theta^{c_7} \:.
	\end{eqnarray}
	Then plugging (\ref{lower_eq7}) in (\ref{lower_eq6}), it follows that
	\begin{equation} \label{lower_eq8}
	\frac{1}{c_6\: c_8}\: e^{-bc_7{(o|\xi)}_{\tilde{x}}}\: e^{a \rho(x)} \le \frac{1}{c_6}\: \theta^{c_7}\: e^{a \rho(x)} \le P(x,\xi) \:. 
	\end{equation}
Now as $d(x,\tilde{x})=1$, we have by triangle inequality,
\begin{equation} \label{lower_eq9}
{\left(o|\xi\right)}_{\tilde{x}} = {\left(o|\xi\right)}_{x} + {\left(x|\xi\right)}_{\tilde{x}} \le {\left(o|\xi\right)}_{x} + d(x,\tilde{x}) = {\left(o|\xi\right)}_{x} + 1\:.
\end{equation}
Finally, plugging (\ref{lower_eq9}) in (\ref{lower_eq8}), we get that
\begin{equation*}
\left(\frac{e^{-bc_7}}{c_6\: c_8}\right) e^{-bc_7{(o|\xi)}_{x}}\: e^{a \rho(x)}  \le P(x,\xi) \:.
\end{equation*}

	This completes the proof for the lower bound in (\ref{Poisson_estimate}).
	\begin{remark}
		When $x$ lies in a Stolz angle based at $\xi$, $P(x,\xi)$ grows exponentially. This can be seen from (\ref{Poisson_estimate}), as in this case
		\begin{equation*}
		{(o|\xi)}_x \le c' \:,\text{ for some }c' > 0 \:,
		\end{equation*}
		and hence the lower bound of (\ref{Poisson_estimate}) gives,
		\begin{equation*}
		\left(\frac{e^{-k_1 c'}}{c}\right) e^{a \rho (x)} \le P(x,\xi) \:.
		\end{equation*}
	\end{remark}
	\section{Upper bound for the Poisson kernel}
	The aim of this section is to prove the upper bound in (\ref{Poisson_estimate}). As in section $5$, we first choose and fix $\xi \in \partial M$, and then break the proof into two cases: 
	\begin{itemize}
		\item $\rho(x)-3 \le {(x|\xi)}_o$ 
		\item $\rho(x)-3 \ge {(x|\xi)}_o$ \:.
	\end{itemize} 
	Note that the above two cases correspond to whether $x$ lies in a Stolz angle based at $\xi$ or not respectively.
	\subsection{Inside a Stolz angle based at $\xi$ :}
	By Harnack-Yau (Lemma \ref{Har-Yau_lemma}), there exists $c_1 > 0$, such that 
	\begin{equation} \label{upper_eq1}
	P(x,\xi) \le e^{c_1 \rho(x)} \:.
	\end{equation}

\medskip

	Now as 
	\begin{equation*}
	\rho(x)-3 \le {(x|\xi)}_o \:,
	\end{equation*}
	it follows that for any $\tau >0$,
	\begin{equation} \label{upper_eq2}
	e^{-3 \tau} e^{(\tau - a) \rho (x)} \le e^{\tau {(x|\xi)}_o}\: e^{-a \rho (x)} \:.
	\end{equation}

\medskip

We also note that for $c_2 \ge c_1 + a$,
	\begin{equation} \label{upper_eq3}
	e^{c_1 \rho(x)} \le e^{(c_2-a)\rho(x)} \:.
	\end{equation}
	Hence putting $c_3=e^{3c_2}$ and combining (\ref{upper_eq1})-(\ref{upper_eq3}), we get
	\begin{equation*}
	P(x,\xi) \le e^{c_1 \rho (x)} \le e^{(c_2-a)\rho(x)} \le c_3 \:e^{c_2 {(x|\xi)}_o}\: e^{-a \rho (x)} \:.
	\end{equation*}
	\subsection{Outside a Stolz angle based at $\xi$ :}
We consider the geodesic segment joining $x$ to $o$, say $\gamma$. Extending $\gamma$ beyond $o$, we obtain the unique point $\tilde{o}$ on $S(o,1)$ lying on the extended geodesic segment. Also let $\theta$ be the Riemannian angle between $x$ and $\xi$, subtended at $\tilde{o}$. Now by applying Lemma \ref{estimate_lemma} on the Poisson kernel for the cone with vertex $\tilde{o}$ and aperture $\theta/2$, with respect to the axis being the geodesic ray which is the infinite extension of the geodesic segment joining $\tilde{o}$ to $x$ (see \Cref{fig:Upper bound}), we get that there exist two positive constants $c_4$ and $c_5$ such that
	\begin{equation} \label{upper_eq4}
	P(x,\xi) \le c_4 {\left(\frac{1}{\theta}\right)}^{c_5} e^{-a \rho(x)} \:.
	\end{equation}
Note that above we have again used that $P(o,\cdot)\equiv 1$. Next, by Lemma \ref{angle_gromov_rel2}, one has
	\begin{equation} \label{upper_eq5}
	\sin^2 \frac{\theta}{2} \ge e^{-2b{(x|\xi)}_{\tilde{o}}} - e^{-2b d(\tilde{o},x)} \:.
	\end{equation}
	\begin{figure}
		\centering
		\includegraphics[width=.7\linewidth]{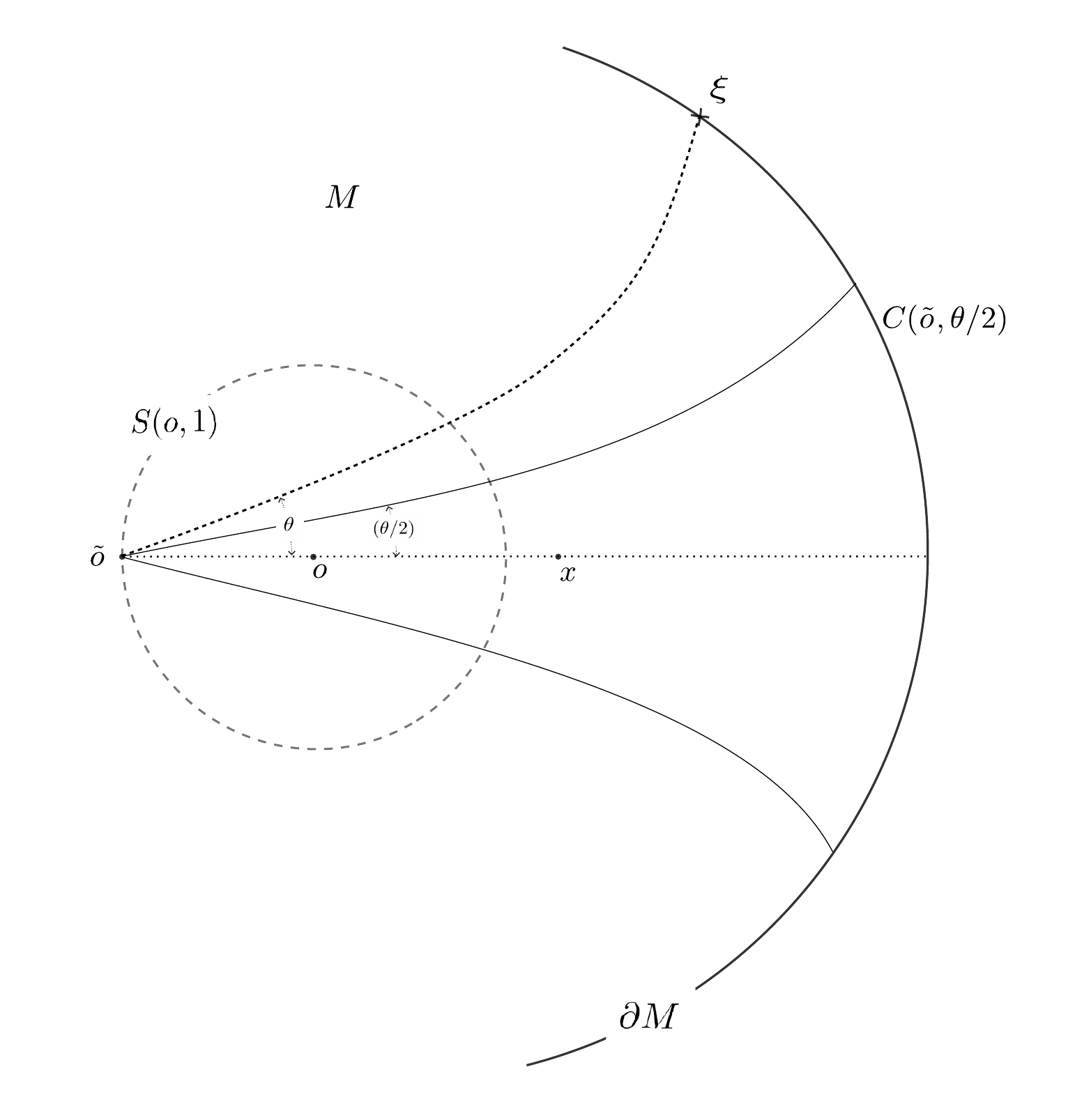}
		\caption{}
		\label{fig:Upper bound}
	\end{figure}
	Now as $d(\tilde{o},o)=1$, we have by triangle inequality,
\begin{equation*}
{\left(x|\xi\right)}_{\tilde{o}}= {\left(x|\xi\right)}_{o} + {\left(o|\xi\right)}_{\tilde{o}}    \le {\left(x|\xi\right)}_{o} + \rho(\tilde{o}) = {\left(x|\xi\right)}_{o} + 1\:,
\end{equation*}     
and thus,
\begin{equation} \label{upper_eq6}
e^{-2b{(x|\xi)}_{\tilde{o}}} \ge e^{-2b}\: e^{-2b{(x|\xi)}_{o}}\:.
\end{equation}	
Similarly, one also has
	\begin{equation} \label{upper_eq7}
e^{-2b d(\tilde{o},x)} \le e^{2b}\: e^{-2b \rho(x)}\:.
\end{equation}
Then plugging (\ref{upper_eq6}) and (\ref{upper_eq7}) in (\ref{upper_eq5}), we get that
\begin{equation} \label{upper_eq8}
	\sin^2 \frac{\theta}{2} \ge e^{-2b}\: e^{-2b{(x|\xi)}_{o}} - e^{2b}\: e^{-2b \rho(x)} \:.
	\end{equation}	
	Now using the hypothesis 
	\begin{equation*}
	\rho(x)-3 \ge {(x|\xi)}_o \:,
	\end{equation*}
	in (\ref{upper_eq8}), one has
	\begin{equation} \label{upper_eq9}
	{\left(\frac{\theta}{2}\right)}^2 \ge \sin^2 \left(\frac{\theta}{2}\right) \ge e^{-2b{(x|\xi)}_o} \: \left(e^{-2b}-e^{-4b}\right)\:.
	\end{equation}
	Finally plugging (\ref{upper_eq9}) in (\ref{upper_eq4}), it follows that
	\begin{equation*}
	P(x,\xi) \le \frac{c_4}{2^{c_5} {\left(e^{-2b}-e^{-4b}\right)}^{c_5/2}}  \:e^{bc_5{(x|\xi)}_o}\:e^{-a \rho(x)} \:.
	\end{equation*}
This completes the proof of Theorem \ref{Poisson_estimate_thm}.

\section{Some applications}
We now present some quantitative versions of concentration and convergence of harmonic measures, as applications of the pointwise estimates of the Poisson kernel.

\medskip

We first introduce the notion of spherical caps. For $\xi \in \partial M$ and $\alpha \in (0,\pi)$, {\em the spherical cap with center $\xi$ and radius $\alpha$}, denoted by $\mathcal{C}(\xi,\alpha)$ is the ball with center $\xi$ and radius $\alpha$, in the metric space $\left(\partial M,\angle_o\right)$. Equivalently, $\mathcal{C}(\xi,\alpha)$ is the intersection of the closure (in $\overline{M}$) of the cone with vertex $o$ and aperture $\alpha$, with respect to the axis being the geodesic ray joining $o$ to $\xi$, with the boundary at infinity $\partial M$.

\subsection{Concentration of harmonic measure:} As a simple consequence of the fact that the Dirichlet problem at infinity is solvable \cite{AS}, one has for $\xi \in \partial M$ and $x \in M$, the weak convergence of measures:
\begin{equation*}
\mu_{x} \to \delta_\xi\:,\text{ as } x \to \xi \text{ in cone topology.}
\end{equation*}
Our first result is to obtain a quantitative version of the above convergence by means of estimating the concentration of harmonic measures on spherical caps, in terms of their aperture and an exponential decay in the distance of $x$ from $o$. More precisely:
\begin{proposition} \label{application_thm1}
\label{cor_lower_bound_har_meas}
Let $M,K$ and $C$ be as in Theorem \ref{Poisson_estimate_thm}. Choose and fix $\xi \in \partial M$. Let $\alpha \in (0,\pi)$ and $x$ lie on the geodesic ray joining $o$ to $\xi$. Then 
\begin{equation} \label{lower_bound_har_meas_eq}
\mu_{x}\left(\partial M \setminus \mathcal{C}(\xi,\alpha)\right) \le \frac{C\: e^{-a \rho(x)}}{{\left(\sin (\alpha/2)\right)}^{2K/a}} \:.
\end{equation}  
\end{proposition}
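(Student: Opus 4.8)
The plan is to exploit the definition of the Poisson kernel as a Radon--Nikodym derivative to rewrite the harmonic measure of the complement of the cap as an integral, and then to feed in the upper bound of Theorem \ref{Poisson_estimate_thm} together with a pointwise control of the Gromov product $(x|\eta)_o$ for $\eta$ outside $\mathcal{C}(\xi,\alpha)$. First I would write, using $P(x,\eta)=\frac{d\mu_x}{d\mu_o}(\eta)$,
$$
\mu_x(\partial M \setminus \mathcal{C}(\xi,\alpha)) = \int_{\partial M \setminus \mathcal{C}(\xi,\alpha)} P(x,\eta)\, d\mu_o(\eta) \le C\, e^{-a\rho(x)}\int_{\partial M \setminus \mathcal{C}(\xi,\alpha)} e^{2K(x|\eta)_o}\, d\mu_o(\eta),
$$
where the inequality is the upper bound of Theorem \ref{Poisson_estimate_thm} (recall $d(o,x)=\rho(x)$).

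The heart of the matter is a uniform pointwise bound on $e^{2K(x|\eta)_o}$ valid for every $\eta \in \partial M \setminus \mathcal{C}(\xi,\alpha)$. Since $x$ lies on the geodesic ray from $o$ to $\xi$, the Riemannian angle $\theta := \angle_o(x,\eta)$ equals $\angle_o(\xi,\eta)$, and the definition of the spherical cap forces $\theta \ge \alpha$, hence $\sin(\theta/2) \ge \sin(\alpha/2)$ (both half-angles lie in $(0,\pi/2)$). Applying Lemma \ref{angle_gromov_rel3} with base point $o$, interior point $x$ and boundary point $\eta$, and then using the elementary inequality $\frac{e^{-2a(x|\eta)_o}-e^{-2ad(o,x)}}{1-e^{-2ad(o,x)}} \le e^{-2a(x|\eta)_o}$ (which reduces, after clearing the positive denominator, to the nonnegativity $(x|\eta)_o \ge 0$), I obtain $\sin^2(\theta/2) \le e^{-2a(x|\eta)_o}$. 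Combining these gives $\sin(\alpha/2) \le e^{-a(x|\eta)_o}$, so $(x|\eta)_o \le \frac{1}{a}\log\frac{1}{\sin(\alpha/2)}$ and therefore $e^{2K(x|\eta)_o} \le (\sin(\alpha/2))^{-2K/a}$. The degenerate collinear configuration $\eta=\xi^-$ is harmless: there $\theta=\pi$ and $(x|\eta)_o=0$, so the bound holds with equality.

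Finally, substituting this uniform bound into the integral and using that $\mu_o$ is a probability measure, so that $\mu_o(\partial M \setminus \mathcal{C}(\xi,\alpha)) \le 1$, yields
$$
\mu_x(\partial M \setminus \mathcal{C}(\xi,\alpha)) \le C\, e^{-a\rho(x)}\, (\sin(\alpha/2))^{-2K/a} = \frac{C\, e^{-a\rho(x)}}{(\sin(\alpha/2))^{2K/a}},
$$
which is precisely the claimed estimate. The only genuinely delicate point is the pointwise step: one must verify that the comparison inequality of Lemma \ref{angle_gromov_rel3} is sharp enough to convert ``large angle at $o$'' into the clean bound $e^{2K(x|\eta)_o} \le (\sin(\alpha/2))^{-2K/a}$ with exactly the exponent $2K/a$, and in particular that the ratio appearing in Lemma \ref{angle_gromov_rel3} may be discarded in favour of $e^{-2a(x|\eta)_o}$ at the sole cost of the nonnegativity of the Gromov product.
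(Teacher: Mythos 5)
Your proposal is correct and follows essentially the same route as the paper: rewrite the measure of the complement of the cap as $\int P(x,\eta)\,d\mu_o(\eta)$, apply the upper bound of Theorem \ref{Poisson_estimate_thm}, and use Lemma \ref{angle_gromov_rel3} to convert $\angle_o(\xi,\eta)\ge\alpha$ into the pointwise bound $e^{2a(x|\eta)_o}\le \sin^{-2}(\alpha/2)$. The only cosmetic difference is in the intermediate algebra (the paper rearranges the comparison inequality to $e^{2a(x|\eta)_o}\le \bigl(\sin^2(\theta/2)+e^{-2a\rho(x)}\cos^2(\theta/2)\bigr)^{-1}$ before discarding the second term, whereas you discard the denominator first via $(x|\eta)_o\ge 0$), and you are slightly more careful in addressing the degenerate collinear case excluded by the lemma's hypothesis.
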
 
\begin{proof}
Using the fact that the Poisson kernel $P(x,\cdot)$ is the Radon-Nykodym derivative of $\mu_x$ with respect to $\mu_o$ and then applying the upper bound in the estimate of the Poisson kernel (\ref{Poisson_estimate}), we get that
\begin{eqnarray} \label{application_thm1_eq1}
\mu_{x} \left(\partial M \setminus \mathcal{C}(\xi,\alpha)\right) 
&=& \int_{\partial M \setminus \mathcal{C}(\xi,\alpha)} P(x,\eta)\:d\mu_o(\eta) \nonumber\\
&\le & C \:  e^{-a \rho(x)} \int_{\partial M \setminus \mathcal{C}(\xi,\alpha)} e^{2K{(x|\eta)}_o}\:d\mu_o(\eta)\:.
\end{eqnarray}
Now by Lemma \ref{angle_gromov_rel3}, for any $\eta \ne \xi \in \partial M$,
\begin{equation*}
\sin^2 \left(\frac{\angle_o(\xi,\eta)}{2}\right) = \sin^2 \left(\frac{\angle_o(x,\eta)}{2}\right) \le \frac{e^{-2a{(x|\eta)}_o} - e^{-2a\rho(x)}}{1-e^{-2a\rho(x)}}\:,
\end{equation*}
which in turn implies that
\begin{eqnarray} \label{application_thm1_eq2}
e^{2a{(x|\eta)}_o} & \le & \frac{1}{\left(1-e^{-2a\rho(x)}\right) \sin^2 \left(\frac{\angle_o(\xi,\eta)}{2}\right) + e^{-2a\rho(x)}} \nonumber\\
&=& \frac{1}{\sin^2 \left(\frac{\angle_o(\xi,\eta)}{2}\right) + e^{-2a\rho(x)}\cos^2 \left(\frac{\angle_o(\xi,\eta)}{2}\right)} \nonumber\\
&\le & \frac{1}{\sin^2 \left(\frac{\angle_o(\xi,\eta)}{2}\right)}\:. 
\end{eqnarray}
Then plugging (\ref{application_thm1_eq2}) in (\ref{application_thm1_eq1}), we get that
\begin{eqnarray*}
\mu_{x} \left(\partial M \setminus \mathcal{C}(\xi,\alpha)\right) & \le & C\: e^{-a \rho(x)} \int_{\partial M \setminus \mathcal{C}(\xi,\alpha)} \frac{1}{{\left(\sin(\angle_o(\xi,\eta)/2)\right)}^{2K/a}}\:d\mu_o(\eta) \nonumber\\
&\le & \frac{C\: e^{-a \rho(x)}}{{\left(\sin (\alpha/2)\right)}^{2K/a}} \:.
\end{eqnarray*}
\end{proof}

\subsection{Convergence of harmonic measures:}
For $\xi \in \partial M$, let $\gamma_\xi$ denote the radial geodesic ray such that $\gamma_\xi(0)=o$ and $\gamma_\xi(\infty)=\xi$. We now consider the radial projection map from the finite sphere of radius $R$ with center $o$, to the boundary at infinity:
\begin{eqnarray*}
\pi_R &:& S(o,R) \to \partial M \\
&& \gamma_\xi(R) \mapsto \gamma_\xi(\infty)=\xi\:.
\end{eqnarray*}
Then by taking the pushforward of the harmonic measures $\mu_{o,R}$ on $S(o,R)$ onto $\partial M$ under $\pi_R$, we get a family of probability measures $\{{(\pi_R)}_* \mu_{o,R}\}_{R>0}$ on $\partial M$. In the special case of rank one Riemannian symmetric spaces of noncompact type or more generally, negatively curved Harmonic manifolds, these pushforwarded measures coincide with $\mu_o$, the harmonic measure at infinity. In the full generality of Hadamard manifolds of pinched negative curvature however, this is no longer true, in fact, they may not even be mutually absolutely continuous with respect to $\mu_o$. However by soft 
arguments it is possible to show the following weak convergence:
\begin{equation*}
{(\pi_R)}_* \mu_{o,R} \to \mu_o\:,\text{ as } R \to \infty\:.
\end{equation*}
Our next result is to obtain an exponential rate of the above convergence in the radius $R$, in the case of H\"older continuous functions:
\begin{theorem} \label{application_thm2}
Let $M$ and $K$ be as in Theorem \ref{Poisson_estimate_thm}. Then there exists a positive constant $c_1$ depending solely on $a,b$ and $n$, such that for sufficiently large $R>0$ and any positive $C^{\beta}$-H\"older continuous function $f$ on $(\partial M,\angle_o)$, we have
\begin{equation*}
\left|\int_{\partial M} f\: d\mu_o - \int_{\partial M} f\: d\left({(\pi_R)}_* \mu_{o,R}\right)\right| \le c_1 \: {\|f\|}_{C^\beta}\: e^{-\lambda R}\:,
\end{equation*}
where
\begin{equation*}
\lambda=\frac{a^2 \beta}{a \beta +2K}\:.
\end{equation*}
\end{theorem}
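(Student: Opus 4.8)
The plan is to reduce the statement to a uniform pointwise estimate on the sphere $S(o,R)$ and then to run a concentration argument against the Poisson kernel, in the same spirit as Proposition \ref{application_thm1}. Let $u_f$ be the solution of the Dirichlet problem at infinity with boundary value $f$, so that $\int_{\partial M} f\, d\mu_o = u_f(o)$. The first key observation is that, since $u_f$ is harmonic on $M$ and continuous on the compact ball $\overline{B(o,R)}$, the defining property of the harmonic measure $\mu_{o,R}$ at the center $o$ gives $u_f(o) = \int_{S(o,R)} u_f\, d\mu_{o,R}$. On the other hand, the change of variables for the pushforward gives $\int_{\partial M} f\, d((\pi_R)_*\mu_{o,R}) = \int_{S(o,R)} (f\circ \pi_R)\, d\mu_{o,R}$, where $\pi_R(y) = \eta_y$ is the endpoint of the radial geodesic ray from $o$ through $y$. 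Subtracting, the quantity to be estimated becomes
\begin{equation*}
\left|\int_{S(o,R)} \left(u_f(y) - f(\eta_y)\right)\, d\mu_{o,R}(y)\right| \le \int_{S(o,R)} \left|u_f(y) - f(\eta_y)\right|\, d\mu_{o,R}(y),
\end{equation*}
so it suffices to bound $|u_f(y) - f(\eta_y)|$ uniformly for $y \in S(o,R)$.

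For the pointwise estimate I would write $u_f(y) = \int_{\partial M} f(\eta)\, P(y,\eta)\, d\mu_o(\eta)$ and use that $\int_{\partial M} P(y,\eta)\, d\mu_o(\eta) = \mu_y(\partial M) = 1$ to center the difference:
\begin{equation*}
u_f(y) - f(\eta_y) = \int_{\partial M} \left(f(\eta) - f(\eta_y)\right) P(y,\eta)\, d\mu_o(\eta).
\end{equation*}
Applying the H\"older bound $|f(\eta) - f(\eta_y)| \le |f|_{C^\beta}\, (\angle_o(\eta,\eta_y))^\beta$ then reduces everything to controlling the quantity $J(y) := \int_{\partial M} (\angle_o(\eta,\eta_y))^\beta\, P(y,\eta)\, d\mu_o(\eta)$, uniformly in $y \in S(o,R)$.

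The heart of the argument is to estimate $J(y)$ by splitting $\partial M$ at a threshold angle $\alpha = \alpha(R)$ about $\eta_y$, to be optimized. On the cap $\{\eta : \angle_o(\eta,\eta_y) \le \alpha\}$ the factor $(\angle_o)^\beta$ is at most $\alpha^\beta$, and since $\int P(y,\cdot)\, d\mu_o = 1$ this region contributes at most $\alpha^\beta$. On the complement I would use the upper bound of Theorem \ref{Poisson_estimate_thm}, namely $P(y,\eta) \le C e^{-aR} e^{2K(y|\eta)_o}$ (recall $\rho(y) = R$), together with the estimate $e^{2a(y|\eta)_o} \le (\sin(\angle_o(\eta_y,\eta)/2))^{-2}$ coming from Lemma \ref{angle_gromov_rel3} exactly as in \eqref{application_thm1_eq2}, using $\angle_o(y,\eta) = \angle_o(\eta_y,\eta)$. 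Since $\sin(\cdot/2)$ is increasing on $[0,\pi]$ and $\mu_o$ is a probability measure, this region contributes at most $\pi^\beta C e^{-aR} (\sin(\alpha/2))^{-2K/a} \le c\, e^{-aR}\alpha^{-2K/a}$ for $\alpha$ small (via $\sin(\alpha/2) \ge \alpha/\pi$). Hence $J(y) \le \alpha^\beta + c\, e^{-aR}\alpha^{-2K/a}$ uniformly in $y \in S(o,R)$.

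It remains to balance the two terms: choosing $\alpha$ so that $\alpha^{\beta + 2K/a} \asymp e^{-aR}$, i.e. $\alpha = e^{-a^2 R/(a\beta + 2K)}$, makes both contributions comparable and yields $J(y) \le c\, e^{-\lambda R}$ with $\lambda = a^2\beta/(a\beta + 2K)$; the hypothesis that $R$ be large is precisely what guarantees $\alpha < \pi$, so that the near/far splitting and the bound $\sin(\alpha/2) \ge \alpha/\pi$ are valid. Integrating the resulting pointwise bound $|u_f(y) - f(\eta_y)| \le |f|_{C^\beta}\, J(y) \le c\, |f|_{C^\beta}\, e^{-\lambda R}$ against the probability measure $\mu_{o,R}$ gives the claim, with $\|f\|_{C^\beta} \ge |f|_{C^\beta}$ absorbing the constant. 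The only genuinely delicate point is the balancing of the two competing contributions to $J(y)$ --- the small-angle region where the Poisson mass concentrates versus the large-angle region where the H\"older factor is bounded but the Poisson kernel must decay --- and checking that this trade-off produces exactly the exponent $\lambda$; everything else is a routine combination of the mean value property, the Poisson upper bound, and Lemma \ref{angle_gromov_rel3}.
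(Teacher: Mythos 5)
Your proposal is correct and follows essentially the same route as the paper: split $\partial M$ at a threshold angle $\alpha(R)$ about $\eta_y$, use the H\"older bound on the cap, use the Poisson kernel upper bound of Theorem \ref{Poisson_estimate_thm} together with Lemma \ref{angle_gromov_rel3} on the complement, and balance to get $\lambda=a^2\beta/(a\beta+2K)$. The only differences are cosmetic --- you invoke the mean value property of $\mu_{o,R}$ deterministically rather than via the Markov property, and your centering trick $\int_{\partial M}P(y,\cdot)\,d\mu_o=1$ merges the paper's two error terms $E_1,E_2$ into one --- so the argument matches the paper's proof in all essentials.
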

\begin{proof}
By the probabilistic interpretation of harmonic measures, the conditional expectation of Brownian motion and the Markov property, we have
\begin{equation} \label{application_thm2_pf_eq1}
\int_{\partial M} f\:d\mu_o= \int_{S(o,R)} \left(\int_{\partial M} f\:d\mu_x \right)d\mu_{o,R}(x)\:.
\end{equation}
Now for any $\alpha \in (0,\pi)$, we write 
\begin{equation} \label{application_thm2_pf_eq2}
\int_{\partial M} f\:d\mu_x = \int_{\mathcal{C}(\pi_R(x),\alpha)} f\:d\mu_x + \int_{\partial M \setminus \mathcal{C}(\pi_R(x),\alpha)} f\:d\mu_x \:.
\end{equation}
By the concentration of harmonic measures on spherical caps (\ref{lower_bound_har_meas_eq}) we have for $C$ as in the conclusion of Proposition \ref{application_thm1},
\begin{equation}  \label{application_thm2_pf_eq3}
\left|\int_{\partial M \setminus \mathcal{C}(\pi_R(x),\alpha)} f\:d\mu_x \right| \le \frac{C\:{\|f\|}_\infty\: e^{-a \rho(x)}}{{\left(\sin (\alpha/2)\right)}^{2K/a}}\:.
\end{equation}
Next by the $C^\beta$-H\"older regularity of $f$, we have the following estimate:
\begin{equation} \label{application_thm2_pf_eq4}
\int_{\mathcal{C}(\pi_R(x),\alpha)} f\:d\mu_x  = f\left(\pi_R(x)\right) \mu_x \left(\mathcal{C}(\pi_R(x),\alpha) \right) + E_1(f,\alpha,\beta) \:,
\end{equation}
with 
\begin{equation} \label{application_thm2_pf_eq5}
\left|E_1(f,\alpha,\beta)\right| \le {|f|}_{C^\beta}\: \alpha^\beta\:.
\end{equation}
Moreover, by the concentration of harmonic measures on spherical caps (\ref{lower_bound_har_meas_eq}) we have,
\begin{equation} \label{application_thm2_pf_eq6}
f\left(\pi_R(x)\right) \mu_x \left(\mathcal{C}(\pi_R(x),\alpha) \right) = f\left(\pi_R(x)\right)  + E_2(f,x,\alpha)\:,
\end{equation}
with
\begin{equation}\label{application_thm2_pf_eq7}
\left|E_2(f,x,\alpha)\right| \le \frac{C\:{\|f\|}_\infty\: e^{-a \rho(x)}}{{\left(\sin (\alpha/2)\right)}^{2K/a}}\:,
\end{equation}
where $C$ is as in the conclusion of Proposition \ref{application_thm1}. Thus plugging (\ref{application_thm2_pf_eq2})-(\ref{application_thm2_pf_eq7}) in (\ref{application_thm2_pf_eq1}), we get that
\begin{equation*}
\left|\int_{\partial M} f\: d\mu_o - \int_{\partial M} f\: d\left({(\pi_R)}_* \mu_{o,R}\right)\right| \le {|f|}_{C^\beta}\: \alpha^\beta + \frac{2C\:{\|f\|}_\infty\: e^{-a R}}{{\left(\sin (\alpha/2)\right)}^{2K/a}}\:.
\end{equation*}
Then by setting
\begin{equation*}
R = \frac{1}{a}\left(\frac{2K}{a}+\beta\right) \log\left(\frac{1}{\alpha}\right),
\end{equation*}
we have for
\begin{equation*}
\lambda=\frac{a^2 \beta}{a \beta +2K}\:,
\end{equation*}
and positive constants $c_2,c_3$ depending only on $a,b$ and $n$,
\begin{eqnarray*}
\left|\int_{\partial M} f\: d\mu_o - \int_{\partial M} f\: d\left({(\pi_R)}_* \mu_{o,R}\right)\right| & \le & \left({|f|}_{C^\beta} + c_2 \: {\|f\|}_\infty\right) e^{-\lambda R} \\
& \le & c_3 \: {\|f\|}_{C^\beta}\: e^{-\lambda R}\:.
\end{eqnarray*}
\end{proof}

\section*{Acknowledgements}
The second and the third authors are supported by research fellowships from Indian Statistical Institute.

\bibliographystyle{amsplain}

\end{document}